\newtheorem{claim}{Claim}[]
\newtheorem{assumption}{Assumption}[]
\newtheorem{theorem}{Theorem}[section]
\newtheorem{lemma}[theorem]{Lemma}
\theoremstyle{definition}
\newtheorem{definition}[theorem]{Definition}
\newcommand{\D}{\mathcal{D}}
\newcommand{\DV}{\mathcal{D}_\mathcal{V}}
\newcommand{\DW}{\mathcal{D}_\mathcal{W}}
\newcommand{\DVW}{\mathcal{D}_{\mathcal{VW}}}
\newenvironment{proofN}[1]{\noindent\textit{Proof of #1.}}{\hfill$\square$\\}
\newcommand{\Case}[1]{\textbf{Case #1.}}
\newcommand{\V}{\mathcal{V}}
\newcommand{\W}{\mathcal{W}}
\begin{document}

\title[]{On the group action of $Mod(M,F)$ on the disk complex}

\author{Jungsoo Kim}
\date{February 23, 2016}

\begin{abstract}
Let $(\V,\W;F)$ be a weakly reducible, unstabilized, Heegaard splitting of genus at least three in an orientable, irreducible $3$-manifold $M$.
Then $Mod(M,F)$ naturally acts on the disk complex $\D(F)$ as a group action.
In this article, we prove if $F$ is topologically minimal and its topological index is two, then the orbit of any element of $\D(F)$ for this group action consists of infinitely many elements.
Moreover, we prove there are at most  two elements of $\D(F)$ whose orbits are finite if the genus of $F$ is three.
\end{abstract}

\address{\parbox{4in}{
	BK21 PLUS SNU Mathematical Sciences Division,\\ Seoul National University\\ 
	1 Gwanak-ro, Gwanak-Gu, Seoul 151-747, Korea\\
}} 
	
\email{pibonazi@gmail.com}
\subjclass[2000]{57M50}

\maketitle
\section{Introduction and Result}
Throughout this paper, all surfaces and $3$-manifolds will be taken to be compact and orientable.
Let $(\V,\W;F)$ be a weakly reducible, unstabilized Heegaard splitting of genus at least three in an orientable, irreducible $3$-manifold $M$.
Let $\D(F)$ be the simplicial complex of isotopy classes of compressing disks of $F$, say the ``\textit{disk complex}'', where an $m$-simplex in $\D(F)$ is defined by mutually disjoint, non-isotopic $(m+1)$-compressing disks of $F$.
In \cite{Bachman2010}, Bachman introduced the concepts a ``\textit{topologically minimal surface}'' and the ``\textit{topological index}'', where we say a separating surface $F$ with no torus components is topologically minimal if either $\D(F)=\emptyset$ or $\D(F)$ is not contractible and the topological index of $F$ refers to the homotopy index of $\D(F)$.
Topologically minimal surfaces include incompressible surfaces, strongly irreducible surfaces as well as certain kinds of weakly reducible surfaces and they share important common properties.
For example, a topologically minimal surface intersects an incompressible surface so that the intersection is essential on both surfaces up to isotopy if the manifold is irreducible.

The group of equivalence classes of automorphisms of $M$ that take $F$ onto itself, say $Mod(M,F)$ (see Section 2 of \cite{JohnsonRubinstein2013}), naturally acts on $\D(F)$, where two automorphisms are \textit{equivalent} if there is an isotopy from one to the other by automorphisms that take $F$ onto itself.
Johnson and Rubinstein proved there is an infinite order element in $Mod(M,F)$ if $F$ is of genus at least three and there are two compressing disks $V\subset \V$ and $W\subset \W$ of $F$ such that $\partial V$ intersects $\partial W$ transversely in exactly two points and $\partial V$ is not isotopic to $\partial W$ in $F$, so called a ``\textit{torus twist}'' (see Section 6 of \cite{JohnsonRubinstein2013} or Section 3 of \cite{Johnson2011}).
Moreover, Johnson gave an exact representation of $Mod(M,F)$ when $M$ is the $3$-torus $T^3$ and $F$ is the standard genus three Heegaard splitting of $T^3$ and he showed the torus twist plays a crucial role in $Mod(M,F)$ as a generator (see Theorem 1 and Lemma 7 of \cite{Johnson2011}).

Motivated by the Johnson and Rubinstein's idea, we will prove the following theorem giving a connection between the topological index of $F$ and the group action of $Mod(M,F)$ on $\D(F)$.

\begin{theorem}[Theorem \ref{theorem-mod-invariant-3-main}]\label{theorem-mod-invariant-3}
Let $(\V,\W;F)$ be a weakly reducible, unstabilized Heegaard splitting of genus at least three in an orientable, irreducible $3$-manifold $M$.
If $F$ is topologically minimal and its topological index is two, then  the orbit $Mod(M,F).[D]$ of any element $[D]\in\D(F)$ consists of infinitely many elements.
This means if there is an element of $\D(F)$ having finite orbit, then (i) $F$ is not topologically minimal or (ii) the topological index of $F$ is at least three if $F$ is topologically minimal.
\end{theorem}

In \cite{JungsooKim2014}, the author proved the topological index of $F$ is two when $F$ is topologically minimal and the genus of $F$ is three and described the shape of the subset of $\D(F)$ consisting of simplices having at least one vertex in $\V$ and at least one vertex in $\W$ when $F$ is not topologically minimal.
By using this result, we will prove the following theorem.

\begin{theorem}[Theorem \ref{theorem-mod-invariant-4-main}]\label{theorem-mod-invariant-4}
Let $(\V,\W;F)$ be a weakly reducible, unstabilized Heegaard splitting of genus three in an orientable, irreducible $3$-manifold $M$.
If $F$ is topologically minimal, then the orbit $Mod(M,F).[D]$ of any element of $[D]\in\D(F)$ consists of infinitely many elements.
Moreover, if $F$ is not topologically minimal, then there exist exactly two elements of $\D(F)$ having finite orbits, where either (i) two singleton sets consisting of each element are the only finite orbits or (ii) the set consisting of the two elements is the only finite orbit.
\end{theorem}

\section{Preliminaries\label{section2}}

This section introduces basic notations and the key idea to prove the main theorem.

\begin{definition}
A \textit{compression body} is a $3$-manifold which can be obtained by starting with some closed, orientable, connected surface $F$, forming the product $F\times I$, attaching some number of $2$-handles to $F\times\{1\}$ and capping off all  resulting $2$-sphere boundary components that are not contained in $F\times\{0\}$ with $3$-balls. 
The boundary component $F\times\{0\}$ is referred to as $\partial_+$. 
The rest of the boundary is referred to as $\partial_-$. 
\end{definition}

\begin{definition}
A \textit{Heegaard splitting} of a $3$-manifold $M$ is an expression of $M$ as a union $\V\cup_F \W$, denoted   as $(\V,\W;F)$,  where $\V$ and $\W$ are compression bodies that intersect in a transversally oriented surface $F=\partial_+\V=\partial_+\W$. 
We say $F$ is the \textit{Heegaard surface} of this splitting. 
If $\V$ or $\W$ is homeomorphic to a product, then we say the splitting  is \textit{trivial}. 
If there are compressing disks $V\subset \V$ and $W\subset \W$ such that $V\cap W=\emptyset$, then we say the splitting is \textit{weakly reducible} and call the pair $(V,W)$ a \textit{weak reducing pair}. 
If $(V,W)$ is a weak reducing pair and $\partial V$ is isotopic to $\partial W$ in $F$, then we call $(V,W)$ a \textit{reducing pair}.
If the splitting is not trivial and we cannot take a weak reducing pair, then we call the splitting \textit{strongly irreducible}. 
If there is a pair of compressing disks $(\bar{V},\bar{W})$ such that $\bar{V}$ intersects $\bar{W}$ transversely in a point in $F$, then we call this pair a \textit{canceling pair} and say the splitting is \textit{stabilized}. 
Otherwise, we say the splitting is \textit{unstabilized}.
\end{definition}

\begin{definition}
Let $F$ be a surface of genus at least two in a $3$-manifold $M$. 
Then the \emph{disk complex} $\D(F)$ is defined as follows: 
\begin{enumerate}[(i)]
\item Vertices of $\D(F)$ are isotopy classes of compressing disks for $F$.
\item A set of $m+1$ vertices forms an $m$-simplex if there are representatives for each
that are pairwise disjoint.
\end{enumerate}
We denote the isotopy class of a compressing disk $D$ of $F$ as $[D]$.
\end{definition}

\begin{definition}
Consider a Heegaard splitting $(\V,\W;F)$ of a $3$-manifold $M$. 
Let $\DV(F)$ and $\DW(F)$ be the subspaces of $\D(F)$ spanned by compressing disks in $\V$ and $\W$ respectively. 
We call these subspaces \textit{the disk complexes of $\V$ and $\W$}.
Let $\DVW(F)$ be the subset  of $\D(F)$ consisting of simplices having at least one vertex from $\DV(F)$ and at least one vertex from $\DW(F)$.
If there is no confusion, we will use the disk $V\subset \V$ instead of the isotopy class $[V]\in\DV(F)$ for the sake of convenience.
\end{definition}

Note that (i) each of $\DV(F)$ and $\DW(F)$ is contractible (see \cite{8}) and (ii) $\D(F)=\DV(F)\cup \DVW(F)\cup \DW(F)$, where $\DV(F)\cap\DW(F)=\emptyset$.

\begin{definition}[Bachman, \cite{Bachman2010}]\label{definition-top-minimal}
The \emph{homotopy index} of a complex $\Gamma$ is defined to be 0 if $\Gamma=\emptyset$, and the smallest $n$ such that $\pi_{n-1}(\Gamma)$ is non-trivial, otherwise.
We say a separating surface $F$ with no torus components is \emph{topologically minimal} if its disk complex $\D(F)$ is either empty or non-contractible.
When $F$ is topologically minimal, we define the \emph{topological index} of $F$ as the homotopy index of $\D(F)$.
If $F$ is topologically minimal and weakly reducible, then the topological index is at least two because $\D(F)$ is connected.
Note that if $F$ is topologically minimal and its topological index is two, then $\DVW(F)$ is disconnected (see  the proof of Theorem 2.5 of \cite{Bachman2010}).
\end{definition}

From now on, we will consider only unstabilized Heegaard splittings of an irreducible $3$-manifold. 
If a Heegaard splitting of a compact $3$-manifold has a reducing pair, then the manifold is reducible or the splitting is stabilized (see \cite{SaitoScharlemannSchultens2005}).
Hence, we will exclude the possibilities of reducing pairs among weak reducing pairs.

\begin{definition}
Suppose $W$ is a compressing disk for $F\subset M$. 
Then there is a subset of $M$ that can be identified with $W\times I$ so that $W=W\times\{\frac{1}2\}$ and $F\cap(W\times I)=(\partial W)\times I$. 
We form the surface $F_W$, obtained by \textit{compressing $F$ along $W$}, by removing $(\partial W)\times I$ from $F$ and replacing it with $W\times(\partial I)$. 
We say the two disks $W\times(\partial I)$ in $F_W$ are the $\textit{scars}$ of $W$. 
\end{definition}

\begin{lemma}[Lustig and Moriah, Lemma 1.1 of \cite{7}] \label{lemma-2-8}
Suppose $M$ is an irreducible $3$-manifold and $(\V,\W;F)$ is an unstabilized Heegaard splitting of $M$. 
If $F'$ is obtained by compressing $F$ along a collection of pairwise disjoint disks, then no $S^2$ component of $F'$ can have scars from disks in both $\V$ and $\W$. 
\end{lemma}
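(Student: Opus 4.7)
The plan is to argue by contradiction and produce a canceling pair of compressing disks for $F$, contradicting the unstabilized hypothesis (recall that, as noted in the paragraph immediately preceding the lemma, reducing pairs among weak reducing pairs have also been excluded, since $M$ is irreducible and $(\V,\W;F)$ is unstabilized by \cite{SaitoScharlemannSchultens2005}). Suppose some $S^2$ component $S$ of $F'$ contains a scar $s_V$ of some $V\subset\V$ and a scar $s_W$ of some $W\subset\W$ in the compressing collection. By the irreducibility of $M$, the sphere $S$ bounds a $3$-ball $B\subset M$. After a small isotopy I may assume $F$ is transverse to $S$, so that $F\cap S$ is a disjoint union of simple closed curves on $F$, each parallel to the boundary of one of the compressing disks in the collection that contributes a scar to $S$, and hence each essential on $F$.

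These curves cut $S$ into planar regions lying alternately in $\V$ and $\W$ (because $F$ separates $M$), and the dual graph of this decomposition is a tree since $S$ is a $2$-sphere. Every leaf of this tree is a region with a single boundary curve, hence a disk, whose boundary is essential on $F$, so it must be a scar of one of the disks in the collection. The hypothesis that scars of both types appear on $S$ thus produces at least one $\V$-leaf and at least one $\W$-leaf. If some $\V$-leaf and $\W$-leaf are adjacent in the tree, their common boundary curve bounds compressing disks in both $\V$ and $\W$, giving a reducing pair, a contradiction. Otherwise, the tree-distance between every pair of opposite-type leaves is at least three, and I would select such a pair at minimal distance and use a shortest arc in $S$ joining their boundaries to perform a band-sum surgery that manufactures a compressing disk $V'\subset\V$ whose boundary meets $\partial W$ transversely in exactly one point, i.e.\ a canceling pair, again a contradiction.

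The main obstacle is the band-sum surgery step when the minimal opposite-type tree-distance is strictly larger than one: I must verify that the resulting pair is a genuine canceling pair (one transverse intersection, not more, and both disks compressing). I expect to handle this by induction on $|F\cap S|$: at each stage, either an adjacent opposite-type pair of leaves appears (and the base case yields a reducing pair directly) or an innermost intermediate region supplies boundary curves along which a band surgery can reduce $|F\cap S|$ while preserving both the sphere type of $S$ and the presence of scars of both $\V$- and $\W$-type, eventually reducing to the adjacent case.
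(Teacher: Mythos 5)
First, a point of reference: the paper does not prove this lemma at all --- it is imported verbatim from Lustig--Moriah (Lemma 1.1 of \cite{7}) --- so your attempt can only be judged on its own terms, and on those terms it has two genuine gaps. The first is the combinatorial setup. A sphere component $S$ of $F'$ is a single connected planar subsurface $S_0\subset F$ with the scar disks attached along its boundary circles, the scars lying in $\operatorname{int}(\V)$ or $\operatorname{int}(\W)$. Whether you keep this natural decomposition or perturb $S$ to be transverse to $F$ (which forces you to push the connected piece $S_0$ entirely into one compression body, say $\V$), the dual graph of the decomposition is a \emph{star}, not a general tree: one central planar region and one leaf per scar on the other side. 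So the regions do not ``alternate'' in $\V$ and $\W$, no two scars are ever adjacent, and opposite-type leaves at tree-distance at least three do not occur. Worse, in the transverse picture with $S_0$ pushed into $\V$, every leaf is a $\W$-scar unless there is exactly one of them, so your assertion that scars of both types produce leaves of both types is simply false once $S$ carries two or more $\W$-scars; your case analysis therefore collapses except in the two-scar case (where the central region is a disk and you do get a reducing pair, which is the correct base case).

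The second gap is the surgery step, which you yourself identify as the main obstacle but then mis-aim. A band sum of two disjoint compressing disks along an arc in $F$ yields a disk disjoint from everything the originals were disjoint from; no such operation can manufacture a disk meeting a disk of the opposite compression body transversely in exactly one point, so a canceling pair is the wrong target. The workable version of your induction is: if $S$ carries at least three scars, two of them lie on the same side and are joined by an arc in $S_0$; band-summing those two scars along that arc replaces them by one, keeps the compressing collection pairwise disjoint, and keeps a sphere component with scars on both sides, so one descends to the two-scar case, where $\partial V$ and $\partial W$ cobound an annulus in $F$ and form a \emph{reducing} pair --- and then the contradiction comes from the fact (already quoted in the paper from \cite{SaitoScharlemannSchultens2005}) that a reducible splitting of an irreducible manifold is stabilized. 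Even this requires checking the degenerate cases your sketch ignores: the band-summed boundary may be inessential in $F$, and the two same-side scars may be the two scars of a single non-separating disk. As written, the middle of your argument rests on an intersection pattern that does not occur, and the decisive step is both unproved and aimed at producing an object (a canceling pair) that the proposed move cannot produce.
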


\begin{lemma}\label{lemma-2-9}
Suppose $M$ is an irreducible $3$-manifold and $(\V,\W;F)$ is an unstabilized Heegaard splitting of $M$ of genus $g\geq 3$. 
For any component $\mathcal{C}$ of $\DVW(F)$, there exists a weak reducing pair $(V,W)\subset \mathcal{C}$ such that (i) $V$ and $W$ are separating in $\V$ and $\W$ respectively or (ii) $V$ and $W$ are non-separating in $\V$ and $\W$ respectively and $\partial V\cup\partial W$ is separating in $F$.
\end{lemma}

\begin{proof}
If there exists a weak reducing pair in $\mathcal{C}$ such that both disks are non-separating in the relevant compression bodies and the union of their boundaries is separating in $F$, then we get the wanted weak reducing pair.
Hence, assume there is no such weak reducing pair in $\mathcal{C}$.

Let $(V,W)$ be a weak reducing pair in $\mathcal{C}$.
If $V$ and $W$ are separating in $\V$ and $\W$ respectively, then we get the wanted weak reducing pair.
Hence, suppose at least one of them, say $V$, is non-separating in $\V$.
If we compress $F$ along $V$, then we get the genus $g-1$ surface $F_V$.
Let $\bar{V}_1$ and $\bar{V}_2$ be the scars of $V$ in $F_V$.

\Case{a} $W$ is separating in $\W$.

Let us compress $F_V$ along $W$ again and we get the resulting surface $F_{VW}$.
Since $W$ is separating in $\W$, $F_{VW}$ consists of two surfaces $F_{VW}'$ and $F_{VW}''$ and both $\bar{V}_1$ and $\bar{V}_2$ belong to only one of $F_{VW}'$ and $F_{VW}''$, say $F_{VW}'$.
Let $\bar{W}$ be the scar of $W$ in $F_{VW}'$.
Let $\alpha$ be a simple arc embedded in $F_{VW}'$ connecting $\bar{V}_1$ and $\bar{V}_2$ such that $\mathrm{int}(\alpha)$ misses $\bar{V}_1\cup\bar{V}_2\cup \bar{W}$.
Then we can see $\alpha$ realizes a band sum of two parallel copies of $V$ in $\V$, say $V'$, such that $V'$ misses $W$.
That is, $\{V,V',W\}$ forms a $2$-simplex in $\DVW(F)$ containing the weak reducing pair $(V,W)$ and therefore it belongs to $\mathcal{C}$.
Since $V'$ is separating in $\V$, we get the wanted weak reducing pair $(V',W)$ in $\mathcal{C}$.
Note that Lemma \ref{lemma-2-8} guarantees the genus of $F_{VW}'$ is at least one, i.e. we do not need to worry about the possibility that $(V',W)$ would be a reducing pair which cannot exist when the splitting is unstabilized.

\Case{b} $W$ is non-separating in $\W$.

Since $\partial V\cup \partial W$ is non-separating in $F$ by the assumption in the start of the proof, if we compress $F_V$ along $W$, then we get the connected surface $F_{VW}$ of genus $g-2>0$.
Hence, we can find a band-sum of two parallel copies of $V$ in $\V$, say $V'$, and that of $W$ in $\W$, say $W'$, satisfying $V'\cap W'=\emptyset$ up to isotopy, where these band-sums are realized by simple arcs $\alpha_V$ and $\alpha_W$ embedded in $F_{VW}$ respectively such that (i) $\alpha_V$ connects the scars of $V$ and $\alpha_W$ connects the scars of $W$, (ii) $\mathrm{int}(\alpha_V)\cup\mathrm{int}(\alpha_W)$ misses the scars of $V$ and $W$, and (iii) $\alpha_V\cap\alpha_W=\emptyset$.
Therefore, $\{V,V',W', W\}$ forms a $3$-simplex in $\DVW(F)$ containing the weak reducing pair $(V,W)$, i.e. the weak reducing pair $(V',W')$ belongs to $\mathcal{C}$.
Since $V'$ and $W'$ are separating in $\V$ and $\W$ respectively, we get the wanted weak reducing pair $(V',W')$ in $\mathcal{C}$.

This completes the proof.
\end{proof}

Here, we introduce the key idea of this article - the \textit{torus twist}.

\begin{definition}[Johnson, Section 3 of \cite{Johnson2011} or Johnson and Rubinstein, Section 6 of \cite{JohnsonRubinstein2013}]\label{definition-torus-twist}
Suppose $(\V,\W;F)$ is a Heegaard splitting of genus at least three in a $3$-manifold $M$.
Let $V$ and $W$ be compressing disks of $\V$ and $\W$ respectively such that $\partial V$ intersects $\partial W$ transversely in exactly two points.
Then we can say  $\mathcal{T}=N(V\cup W)$, a small neighborhood of $V\cup W$, is a solid torus.
If we give directions to $\partial V$ and $\partial W$, then we can see $\partial V$ and $\partial W$ form an orientation of $F$ at each intersection point.
Suppose the induced orientations at the two points are opposite.
Then $F\cap \mathcal{T}$ is a four punctured sphere (see Figure 2 of \cite{Johnson2011}) and $F\cap\partial \mathcal{T}$ consists of four parallel longitudes in $\partial \mathcal{T}$.
(For the sake of convenience, we can imagine $V$ as a small disk so that $V$ would belong to a product neighborhood of a meridian disk of $\mathcal{T}$ and $W$ as a ``long'' disk as in the upper one of Figure \ref{fig-def-torus-twist}.)
\begin{figure}
\includegraphics[width=13cm]{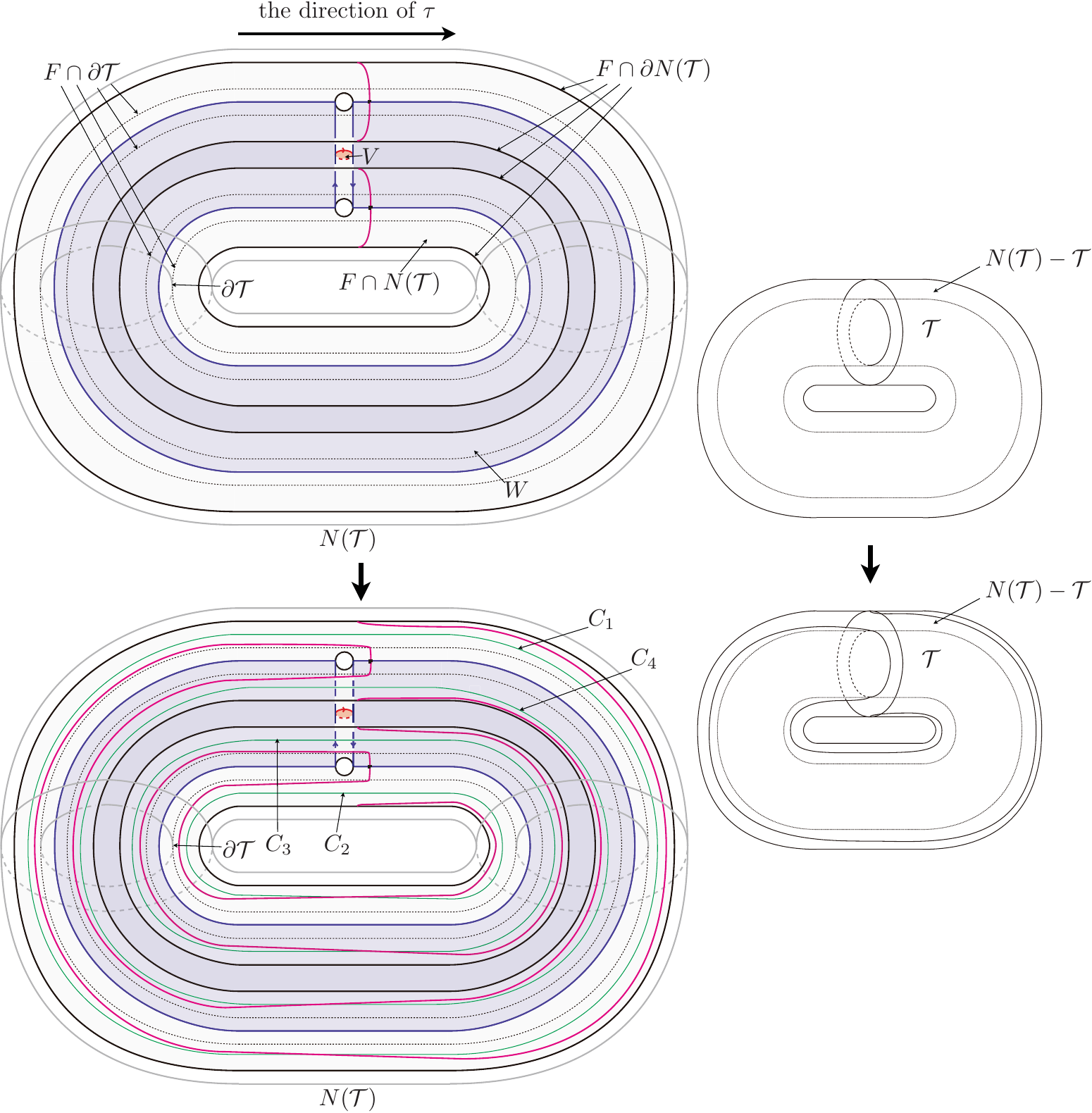}
\caption{A torus twist induces four Dehn-twists on $F$. \label{fig-def-torus-twist}}
\end{figure}
Let us consider a spinning $\tau$ of $\mathcal{T}$ one time along the longitudinal direction such that any point in a loop of $F\cap \partial\mathcal{T}$ goes around the loop once, i.e. each loop of $F\cap \partial\mathcal{T}$ is preserved setwisely during the spinning.
By using the image after the spinning, we can consider $\tau$ as an orientation preserving automorphism of $M$. 
Moreover, we can assume the follows.
\begin{enumerate}
\item $\tau|_{\mathcal{T}}=\mathrm{id}_{\mathcal{T}}$.
\item $\tau|_{\mathrm{cl}(M-N(\mathcal{T}))}=\mathrm{id}_{\mathrm{cl}(M-N(\mathcal{T}))}$ as in the lower one of Figure \ref{fig-def-torus-twist}, where $N(\mathcal{T})$ is a small neighborhood of $\mathcal{T}$ and we take $N(\mathcal{T})$ sufficiently small so that $F\cap N(\mathcal{T})$ would be homeomorphic to $F\cap \mathcal{T}$.
\item $\tau(F)=F$, i.e. $\tau$ is an orientation preserving automorphism of $M$ that takes $F$ onto itself, i.e. $\tau$ is an element of $Mod(M,F)$.
\end{enumerate}
By the assumption, $\tau$ gives a change in $N(\mathcal{T})-\mathcal{T}$ (see the right part of Figure \ref{fig-def-torus-twist}) but it is the identity map elsewhere.

Note that the induced map $\bar{\tau}$ in $Mod(F)$ from $\tau$ consists of Dehn-twists about four curves in $\mathrm{int}(F\cap (N(\mathcal{T})-\mathcal{T}))$ parallel to $F\cap \partial \mathcal{T}$.
We can imagine the four punctured sphere $F\cap N(\mathcal{T})$ as the one obtained by connecting two annuli by a cylinder and then removing the interior of the attaching disks, where the center circle of the cylinder is $\partial V$ or $\partial W$.
Let us name the four curves realizing $\bar{\tau}$  $C_1$, $\cdots$, $C_4$, where $C_1\cup C_4$ belongs to one component of $(F\cap N(\mathcal{T}))-\partial V$,  $C_2\cup C_3$ belongs to the other component, $C_1\cup C_2$ belongs to one component of $(F\cap N(\mathcal{T}))-\partial W$, and $C_3\cup C_4$ belongs to the other component (see the four green curves $C_1$, $\cdots$, $C_4$ in the lower one of Figure \ref{fig-def-torus-twist}).
We consider the Dehn twist about $C_i$ as the Dehn twist in a small neighborhood of $C_i$ in $F$, say $N(C_i)$, and give an $I$-fibration to each $N(C_i)$ so that each $I$-fiber intersects $C_i$ transversely in a point and no $I$-fiber twists around $C_i$.
If we observe the image of an $I$-fiber of $\bar{\tau}$ in $N(C_i)$ for $1\leq i \leq 4$ as in the lower one of Figure \ref{fig-def-torus-twist}, then the image turns left in $N(C_1)$ and $N(C_3)$ and turns right in $N(C_2)$ and $N(C_4)$ from the viewpoint of the one inside $\V$ with respect to the given $I$-fibration
(we can refer to Definition 3.5 of \cite{Yoshizawa2014} for the definition of ``an arc \textit{turns left} or \textit{turns right} in an $I$-fibered annulus'').
The reason is that if $C_i$ and $C_j$ belong to the same component with respect to $\partial V$ or $\partial W$ for $i\neq j$, then the image turns left in one of $N(C_i)$ and $N(C_j)$, say $N(C_i)$, but turns right in $N(C_j)$.
If either (i) the direction of $\tau$ was opposite to that of Figure \ref{fig-def-torus-twist} or (ii) we observe from the viewpoint of the one outside $\V$ instead of that of the one inside $\V$, then we should consider the opposite directions of Dehn twists.
From now on, we will consider the directions of Dehn twists in the sense that we observe from the viewpoint of the one outside $\V$.
We call $\tau$ a \textit{torus twist} defined by the solid torus $\mathcal{T}=N(V\cup W)$.
\end{definition}

\begin{definition}
$Mod(M,F)$ acts on $\D(F)$ naturally as $[g].[D]=[g(D)]\in \D(F)$ for $[g]\in Mod(M,F)$ and $[D]\in \D(F)$.
We define the \textit{orbit} of an element $[D]\in\D(F)$ as
$Mod(M,F). [D]=\{[g].[D]|[g]\in Mod(M,F)\}.$
\end{definition}

\section{The proof of Theorem \ref{theorem-mod-invariant-3}\label{section3}}

In this section, we will prove Theorem \ref{theorem-mod-invariant-3}.
 
\begin{lemma}\label{lemma-disconnected-infinite-I}
Let $M$ be an orientable, irreducible $3$-manifold and $(\V,\W;F)$ an unstabilized Heegaard splitting of $M$ of genus at least three. 
Suppose $(V,W)$ is a weak reducing pair of $(\V,\W;F)$ such that $V$ and $W$ are separating in $\V$ and $\W$ respectively.
If $D$ is a compressing disk of $F$ such that $\partial D$ intersects at least one of $\partial V$ and $\partial W$ up to isotopy in $F$, then the orbit $Mod(M,F).[D]$ of the element $[D]\in \D(F)$ consists of infinitely many elements.
\end{lemma}

\begin{proof}
Without loss of generality, assume $D\subset \W$ and $\partial D$ intersects $\partial V$ up to isotopy.
Note that the compression body containing $D$ itself does not matter in the proof. 

By the assumption that $V$ and $W$ are separating and Lemma \ref{lemma-2-8}, $\partial V$ cuts off a once-punctured surface $F_1$ of genus at least one from $F$ and $\partial W$ separates $\mathrm{cl}(F-F_1)$ into a twice-punctured surface of genus at least one and a once-punctured surface of genus at least one.
Hence, we can find a simple path $\gamma$ connecting $\partial V$ and $\partial W$ in the twice-punctured surface such that $\mathrm{int}(\gamma)$ misses $\partial V\cup\partial W$.
If we drag $\partial W$ along $\gamma$ and push it more into $F_1$, then we get a disk $W'$ which intersects $V$ transversely in exactly two points, i.e. a subarc of $\partial W'$ bounds a bigon $B$ with a subarc of $\partial V$ in $F_1$.
We can see a subarc of $\partial W'$ divides $\mathrm{cl}(F-F_1)$ into two once-punctured surfaces $F_2$ and $F_3$ of genus at least one, where $F_2\cap B$ consists of two points and $F_3\cap B$ consists of a subarc of $\partial V$ (see Figure \ref{fig-torus-twist}).
\begin{figure}
\includegraphics[width=9cm]{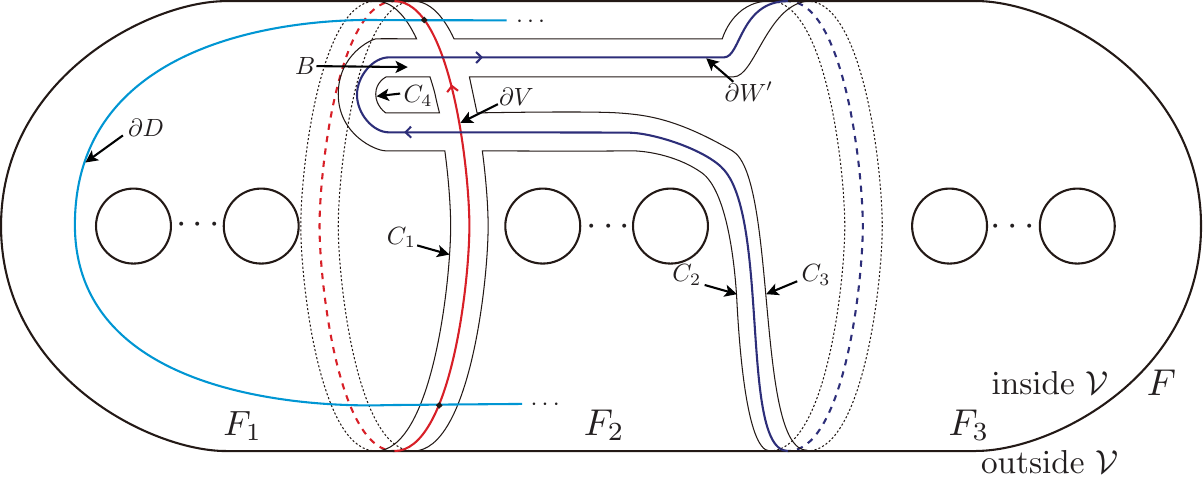}
\caption{the four curves $C_1$, $C_2$, $C_3$ and $C_4$ \label{fig-torus-twist}}
\end{figure}
 
Let $\mathcal{T}$ be the solid torus $N(V\cup W')$ and $\tau$ a torus twist defined by $\mathcal{T}$.
Then $F\cap \mathcal{T}$ is a four-punctured sphere $S$ and the induced map $\bar{\tau}$ in $Mod(F)$ consists of  Dehn twists about four curves in $\mathrm{int}(N(S)-S)$ parallel to $\partial S$, say $C_1$, $\cdots$, $C_4$ (see Figure \ref{fig-torus-twist}), where $N(S)$ is a small neighborhood of $S$ in $F$.
For the sake of convenience, we will use the naming of the four curves and the direction of $\tau$ as in Figure \ref{fig-def-torus-twist} by substituting $V$ and $W'$ for $V$ and $W$ respectively.
Hence, we can assume $\bar{\tau}$ consists of three non-trivial Dehn twists in the disjoint annuli $N(C_1)$, $N(C_2)$ and $N(C_3)$ after discarding the trivial Dehn twist about $C_4$ which bounds a disk in $F$, where each $N(C_i)$ is contained in $\mathrm{int}(F_i)$ for $1\leq i \leq 3$, and therefore $\bar{\tau}$ is the identity map outside $\cup_{j=1}^3 N(C_j)$. 
(Strictly speaking, $\bar{\tau}$ is isotopic to the identity map in $B$ and it is the identity map in $F-(\cup_{j=1}^3 N(C_j)\cup B)$.
But the assumption does not affect our proof.)
Here, we give an $I$-fibration to each $N(C_i)$ as we did in Definition \ref{definition-torus-twist} for $1\leq i \leq 3$.
This means the image of an $I$-fiber of $\bar{\tau}$ turns right in $N(C_1)$ and $N(C_3)$ and turns left in $N(C_2)$ from the viewpoint of the one outside $\V$.

Now we will isotope $D$ so that we could take the simplest image of $\partial D$ of $\bar{\tau}$.

First, we isotope $D$ so that $\partial D$ intersects $(\cup_{j=1}^3 \partial N(C_j))\cup(\partial V\cup\partial W')$ transversely and misses $\partial V\cap\partial W'$.

Let $\bar{F}$ be the closure of the pair of pants component of $F-(\cup_{j=1}^3 N(C_j))$ and $\tilde{F}=\bar{F}\cup(\cup_{j=1}^3 N(C_j))$.
Let $\tilde{C}_i$ ($\bar{C}_i$ resp.) be the component of $\partial N(C_i)$ intersecting $\partial \tilde{F}$ ($\partial \bar{F}$ resp.) for $1\leq i \leq 3$.

We will isotope $D$ so that $\partial D\cap(\cup_{j=1}^3 \partial N(C_j))$ would be minimal as follows.
Let $\tilde{F}_i$ be the closure of the once-punctured component of $F-(\cup_{j=1}^3 \tilde{C}_j)$ adjacent to $\tilde{C}_i$ for $1\leq i \leq 3$.
If there is a subarc of $\partial D$ such that it belongs to $\tilde{F}_i$ ($N(C_i)$ resp.) and it bounds a disk $\Delta$ in $\tilde{F}_i$ ($N(C_i)$ resp.) with a subarc of $\tilde{C}_i$, then we can push this subarc into $\mathrm{int}(N(C_i))$ ($\mathrm{int}(\tilde{F}_i)$ resp.) by using a standard outermost disk argument for $\Delta\cap\partial D$ in $\Delta$ for $1\leq i \leq 3$ (see Figure \ref{fig-outermost}).
\begin{figure}
\includegraphics[width=10cm]{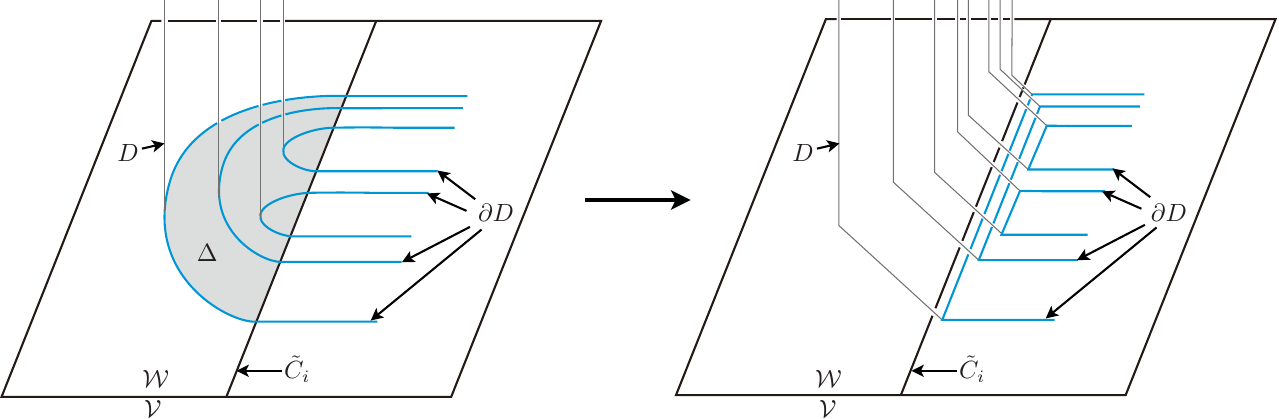}
\caption{the isotopy of $D$ by a standard outermost disk argument \label{fig-outermost}}
\end{figure}
Similarly, if there is a subarc of $\partial D$ such that it belongs to $\bar{F}$ ($N(C_i)$ resp.) and it bounds a disk $\Delta$ in $\bar{F}$ ($N(C_i)$ resp.) with a subarc of $\bar{C}_i$, then we can push this subarc into $\mathrm{int}(N(C_i))$ ($\mathrm{int}(\bar{F})$ resp.) for $1\leq i \leq 3$.
We can see $|\partial D\cap(\cup_{j=1}^3 \partial N(C_j))|$ decreases by two when each outermost disk disappears.
Therefore, we can repeat this argument until there is no such subarc of $\partial D$.
This means every component of $\partial D\cap \bar{F}$, $\partial D\cap \tilde{F}_i$ and $\partial D\cap N(C_i)$ is a properly embedded essential arc in $\bar{F}$, $\tilde{F}_i$ and $N(C_i)$ respectively for $1\leq i \leq 3$ after the isotopies.
Moreover, every component of $\partial D\cap\tilde{F}$ is also a properly embedded essential arc in $\tilde{F}$ because any essential arc in $\bar{F}$ forms an essential arc in $\tilde{F}$ together with two adjacent essential arcs in $\cup_{j=1}^3 N(C_j)$.

In summary, we get Assumption \ref{assumption-1}.

\begin{assumption}\label{assumption-1}
No subarc of $\partial D$ bounds a disk in  $\mathrm{cl}(F-(\cup_{j=1}^3 N(C_j)))$ or $\cup_{j=1}^3 N(C_j)$ with a subarc of $\cup_{j=1}^3 \partial N(C_j)$.
\end{assumption}

Since $\bar{F}$ is a pair of pants, there exist at most three isotopy classes of mutually disjoint essential arcs in $\bar{F}$.
Moreover, there are at most two isotopy classes among them such that they connect (i) $\bar{C}_1$ and $\bar{C}_2$ or (ii) $\bar{C}_3$ and $\bar{C}_2$.
Hence, we isotope $D$ as follows.
\begin{enumerate}
\item This isotopy affects only $\mathrm{int}(\bar{F})$ in $F$,\label{pp0}
\item $\partial D$ misses $B$,\label{pp2} 
\item if a component of $\partial D\cap \bar{F}$ connects $\bar{C}_1$ and $\bar{C}_2$, then it intersects $\partial V$ in exactly one point and it misses $\partial W'$, and
\item if a component of $\partial D\cap \bar{F}$ connects $\bar{C}_3$ and $\bar{C}_2$, then it intersects $\partial W'$ in exactly one point and it misses $\partial V$.
\end{enumerate}
An easy way to imagine this isotopy is to deform $\partial V\cup\partial W'$ instead of moving $\partial D$.
That is, $\partial V\cup\partial W'$ shrinks into a small neighborhood of $\bar{C}_1\cup\gamma\cup \bar{C}_3$ in $\bar{F}$ as in Figure \ref{fig-pants-A}, where $\gamma$ is a path from $\bar{C}_1$ to $\bar{C}_3$ missing the previous two isotopy classes of $\partial D\cap \bar{F}$. 
\begin{figure}
\includegraphics[width=12cm]{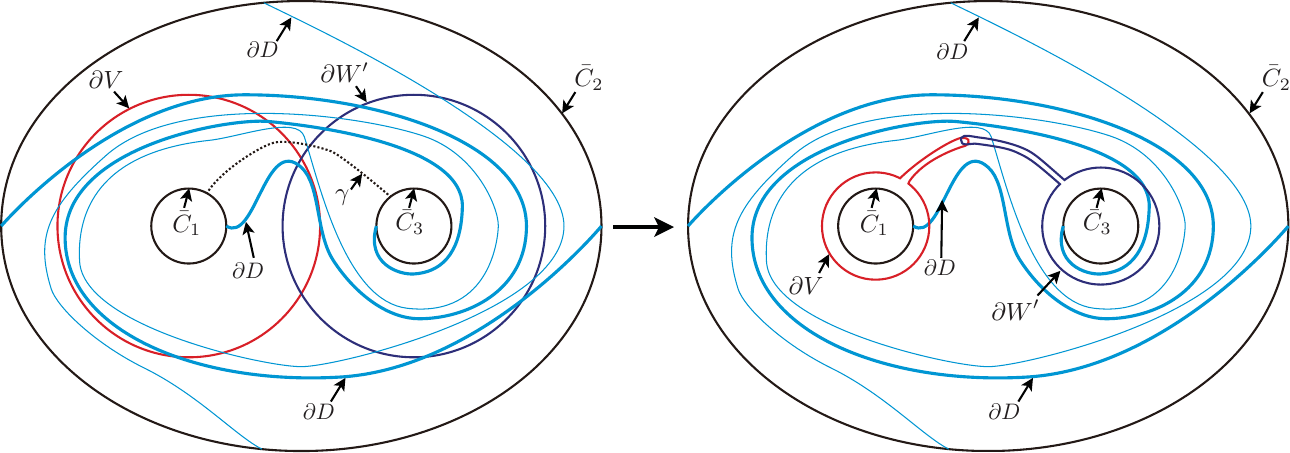}
\caption{An easy way to imagine the isotopy of $\partial D$ \label{fig-pants-A}}
\end{figure}

Next, if $\partial D \cap \tilde{F}$ has a twist about $\tilde{C}_i$ in $ \tilde{F}$ and this starts from $\tilde{C}_i$ for $1\leq i \leq 3$, then we isotope $D$ so that the isotopy pushes such twists out from $\tilde{F}$ as in Figure \ref{fig-push-twists}.
Therefore, we get Assumption \ref{assumption-2}.

\begin{assumption}\label{assumption-2}
No component of $\partial D\cap \tilde{F}$ has a twist about $\tilde{C}_i$ starting from $\tilde{C}_i$ for $1\leq i \leq 3$.
\end{assumption}

If we make sure $|\partial D\cap\tilde{C}_i|$ and $|\partial D\cap\bar{C}_i|$ are unchanged during the isotopy for $1\leq i \leq 3$, then  the shapes of components of $F-(\partial D\cup  (\cup_{j=1}^3 \partial N(C_j))$ are also unchanged, i.e. the minimality of $\partial D\cap(\cup_{j=1}^3 \partial N(C_j))$ is unchanged during the isotopy.

\begin{figure}
\includegraphics[width=9cm]{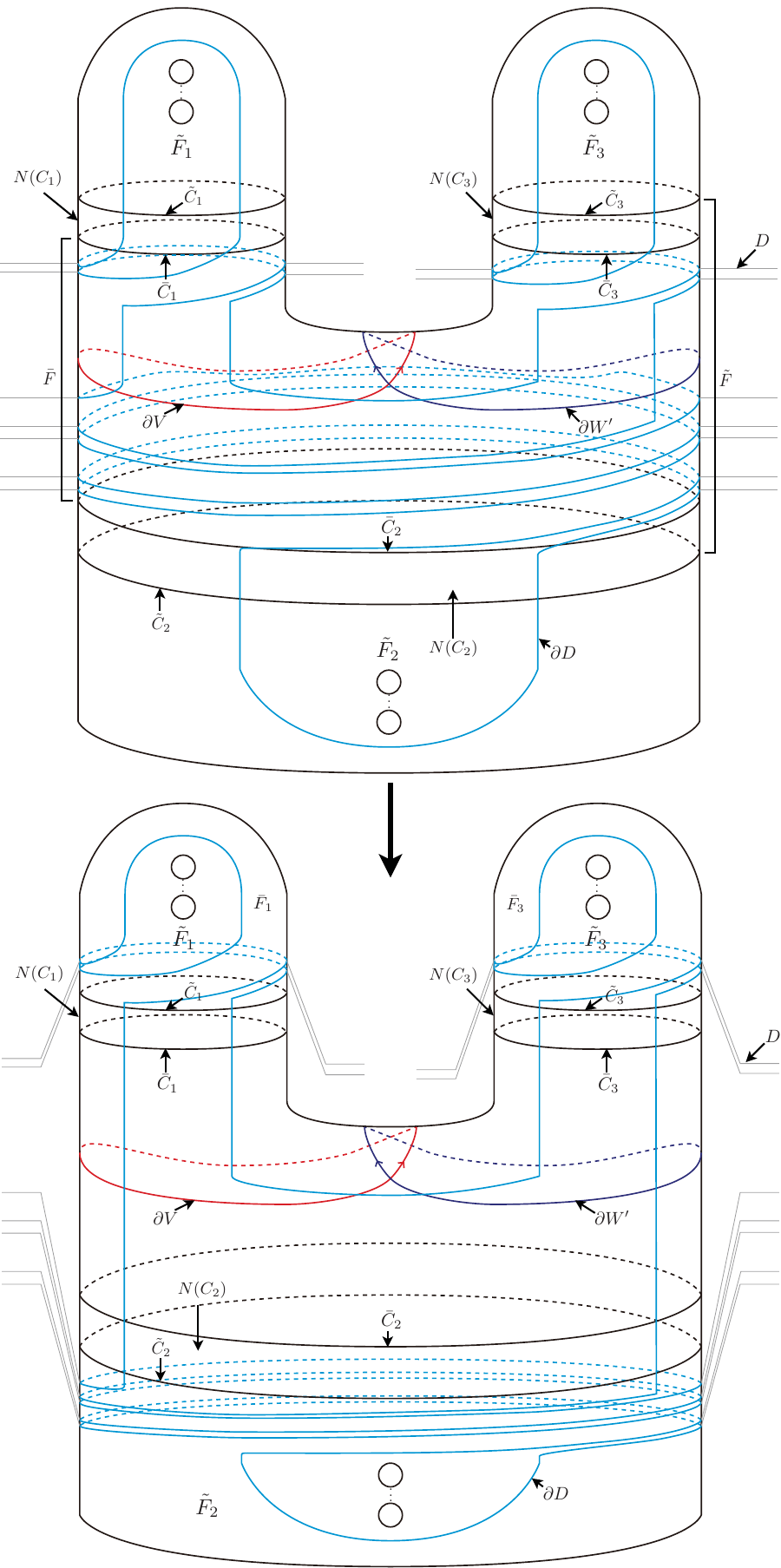}
\caption{push the twists about $\tilde{C}_i$ starting from $\tilde{C}_i$ outside $\tilde{F}$ \label{fig-push-twists}}
\end{figure}

By Assumption \ref{assumption-2} and  Assumption \ref{assumption-1}, we can assume each component of $\partial D\cap N(C_i)$ is an $I$-fiber of the given $I$-fibration of $N(C_i)$ for $1\leq i\leq 3$ by using a small isotopy of $D$.

By the assumption that $\partial D$ intersects $\partial V$ up to isotopy, $\partial D\cap F_1$ must have at least one properly embedded essential arc in $F_1$.
Since the region between $\partial F_1(=\partial V)$ and $\partial \tilde{F}_1(=\tilde{C}_1)$ is just an annulus, such arc in $F_1$ must intersect $\tilde{F}_1$, i.e. $\partial D\cap N(C_1)\neq \emptyset$ up to isotopy.

Let us do the torus twist $\tau$ twice (we will see the reason why we use $\tau^2$ instead of $\tau$ in the proof of Claim \ref{claim-2}).
For the sake of convenience, let $\nu$ be $\tau^2$. 
Then we can see $\nu(D)$ is a compressing disk of $\W$ because $\tau(\W)=\W$.
Let $\bar{\nu}$ be the induced map in $Mod(F)$ from $\nu$.
In order to compare $\bar{\nu}(\partial D)$ with $\partial D$ as two curves in $F$, we assume we prepared a disjoint parallel copy of $D$ in $\W$ near $D$ in advance and consider the image of this copy of $\nu$ as $\nu(D)$.
We take the copy of $D$ in a sufficiently small neighborhood of $D$ in $\W$ so that $\bar{\nu}(\partial D)$ also would not intersect $B$.
Hence, we obtain $\bar{\nu}(\partial D)$ by starting from a parallel, disjoint copy of $\partial D$ in a small neighborhood of $\partial D$ in $F$ and then replacing the parts intersecting $\cup_{j=1}^3 N(C_j)$ by the Dehn twisted subarcs as in the upper one of Figure \ref{fig-high-twists}.
Therefore, we get Assumption \ref{assumption-3}.

\begin{assumption}\label{assumption-3}
A component of $\bar{\nu}(\partial D)-(\cup_{j=1}^3 N(C_j))$ is parallel to the corresponding component of $\partial D-(\cup_{j=1}^3 N(C_j))$ in $F-(\cup_{j=1}^3 N(C_j))$ through a uniquely determined rectangle component of $(F-(\cup_{j=1}^3 N(C_j)))-(\partial D\cup \bar{\nu}(\partial D))$ and vise versa, where two opposite edges of the rectangle come from $\cup_{j=1}^3 \partial N(C_j)$ and the other opposite edges come from $\partial D$ and $\bar{\nu}(\partial D)$ respectively.
\end{assumption}

Hence, $\bar{\nu}(\partial D)$ intersects $\partial D$ transversely in this setting.

Since $\bar{\nu}\in Mod(F)$ and $\bar{\nu}$ is the identity map outside $\cup_{j=1}^3 N(C_j)$, we get Assumption \ref{assumption-4} from Assumption \ref{assumption-1} directly.

\begin{assumption}\label{assumption-4}
No subarc of $\bar{\nu}(\partial D)$ bounds a disk in  $\mathrm{cl}(F-(\cup_{j=1}^3 N(C_j)))$ or $\cup_{j=1}^3 N(C_j)$ with a subarc of $\cup_{j=1}^3 \partial N(C_j)$.
\end{assumption}

Moreover, we get Assumption \ref{assumption-5}.

\begin{assumption}\label{assumption-5}
If $\bar{\nu}(\partial D)$ ($\partial D$ resp.) once comes into $N(C_i)$ for some $i$, then it must intersect every component of $\partial D \cap N(C_i)$ ($\bar{\nu}(\partial D) \cap N(C_i)$ resp.) before it leaves $N(C_i)$.
Indeed, every point of $\partial D\cap \bar{\nu}(\partial D)$ belongs to $\mathrm{int}(\cup_{j=1}^3 N(C_j))$.
\end{assumption}

\begin{figure}
\includegraphics[width=9cm]{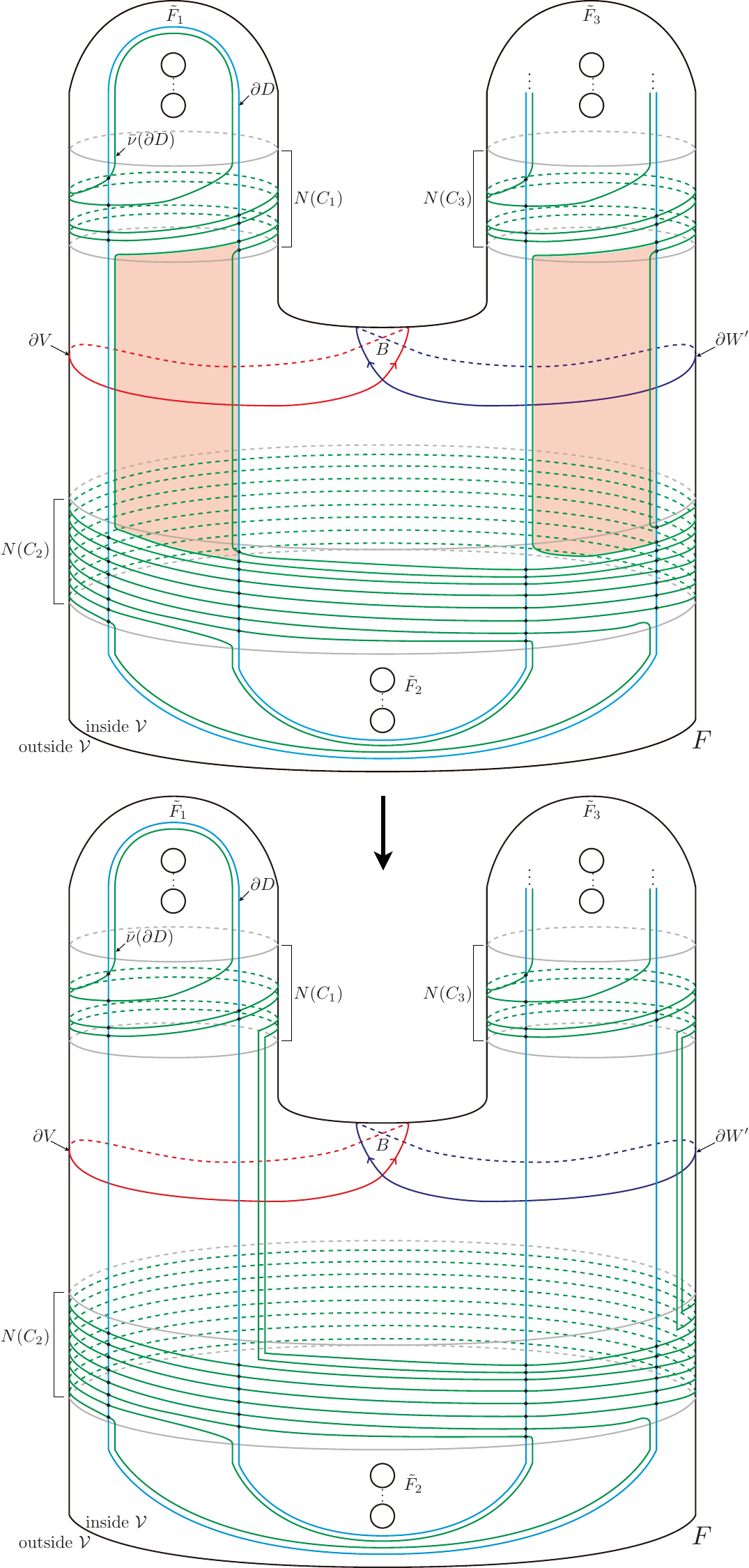}
\caption{removing the bigon regions  \label{fig-high-twists}}
\end{figure}

The next claim is the main part of Lemma \ref{lemma-disconnected-infinite-I}.

\begin{claim}\label{claim-1}
The geometric intersection number $i(\partial D,\bar{\nu}(\partial D))>0$ and it is realized by a sequence of isotopies of $\nu(D)$ in $\W$.
\end{claim}

\begin{proofN}{Claim \ref{claim-1}}
It is sufficient to show that we can isotope $\nu(D)$ in $\mathcal{W}$ so that $\bar{\nu}(\partial D)$ intersects $\partial D$ transversely in at least one point in $F$ and there is no bigon region between them by the \textit{bigon criterion} (see Proposition 1.7 of \cite{PrimerMCGS}).

Suppose there is no bigon region between $\partial D$ and $\bar{\nu}(\partial D)$.
Since $\partial D\cap N(C_1)\neq\emptyset$ up to isotopy, we can guarantee at least one intersection point between $\partial D$ and $\bar{\nu}(\partial D)$ by Assumption \ref{assumption-5}, leading to the result.

Hence, assume there is a bigon region $R$ between $\partial D$ and $\bar{\nu}(\partial D)$ in $F$.
Suppose there is a component of $F-(\partial D \cup \bar{\nu}(\partial D))$ entirely contained in some $N(C_i)$.
Then the component must be a $4$-gon, where two opposite edges come from $\partial D$ and two opposite edges come from $\bar{\nu}(\partial D)$.
Hence, $R$ cannot be contained entirely in any $N(C_j)$ for $1\leq j \leq 3$. 
Therefore, $R$ satisfies one of the follows by Assumption \ref{assumption-5}.
\begin{enumerate}
\item $R\subset\tilde{F}_i\cup N(C_i)$ for some $1\leq i \leq 3$.\label{p1}
\item $R\subset\bar{F}\cup N(C_i)$ for some $1\leq i \leq 3$.\label{p2}
\item $R\subset N(C_1)\cup\bar{F}\cup N(C_3)$, $R\cap N(C_1)\neq\emptyset$, and $R\cap N(C_3)\neq\emptyset$.\label{p3}
\item $R\subset N(C_i)\cup\bar{F}\cup N(C_2)$, $R\cap N(C_i)\neq\emptyset$, and $R\cap N(C_2)\neq\emptyset$ for $i=1$ or $3$.\label{p4}
\end{enumerate}
We will prove the only possible case is (\ref{p4}). 

Let $\partial R=\delta\cup\delta'$, where $\delta\subset \partial D$ and $\delta'\subset \bar{\nu}(\partial D)$.

Suppose $R$ intersects only one $N(C_i)$ for some $i$, i.e. we consider the cases (\ref{p1}) and (\ref{p2}).

Suppose $\delta\subsetneq N(C_i)$, i.e. $|\partial N(C_i)\cap \delta|\geq 2$.
If there are adjacent two points of $\partial N(C_i)\cap \delta$ in $\delta$, then either (i) $\delta$  comes into $N(C_i)$ and then leaves $N(C_i)$  or (ii) $\delta$ leaves  $N(C_i)$ and then comes into $N(C_i)$.
Hence, if $|\partial N(C_i)\cap \delta|\geq 3$, then $\partial D$ comes into $N(C_i)$ and then leaves $N(C_i)$ in the middle of $\delta$ and therefore there must be at least one more intersection point of $\partial D\cap \bar{\nu}(\partial D)$ in the middle of $\delta$ by Assumption \ref{assumption-5}, violating the assumption that $R$ is a bigon.
This means $|\partial N(C_i)\cap \delta|=2$  and symmetrically we get $|\partial N(C_i)\cap \delta'|=0$ or $2$.
Let us start from one intersection point of $\delta\cap\delta'$ and follow $\delta$.
Then $\delta$ passes through a component $\sigma$ of $\partial N(C_i)$, it returns to $\sigma$ again, and we finally ends at the other intersection point of $\delta\cap\delta'$.  
Here, $\delta$ divides $\sigma$ into two subarcs $\sigma_1$ and $\sigma_2$.
If one of $\sigma_1$ and $\sigma_2$ belongs to $R-\delta'$, then it bounds a disk in $R$ with a subarc of $\delta\subset\partial D$, violating Assumption \ref{assumption-1}.
Therefore, if $\sigma$ once comes into $R$ by passing through $\delta$, then it leaves $R$ by passing through $\delta'$, i.e. $\sigma\cap \delta'$ consists of the two points corresponding to the two intersection points $\sigma\cap \delta$.
Hence, $\sigma$ divides $R$ into two triangles in $N(C_i)$ and one rectangle belonging to a component of $\mathrm{cl}(F-(\cup_{j=1}^3 N(C_j)))$, where the boundary of each triangle consists of three edges coming from $\delta$, $\delta'$ and $\sigma$ respectively and the boundary of the rectangle consists of two opposite edges coming from $\sigma$ and the other two opposite edges coming from $\delta$ and $\delta'$ respectively.
Since $\delta'\subset \bar{\nu}(\partial D)$, both edges of these two triangles belonging to $\delta'$ either turn left or turn right in $N(C_i)$.
This means these two edges are positioned in the same direction with respect to the $I$-fibers intersecting $\delta$ in $N(C_i)$ (see (a) of Figure \ref{fig-Mobius}).
\begin{figure}
\includegraphics[width=12cm]{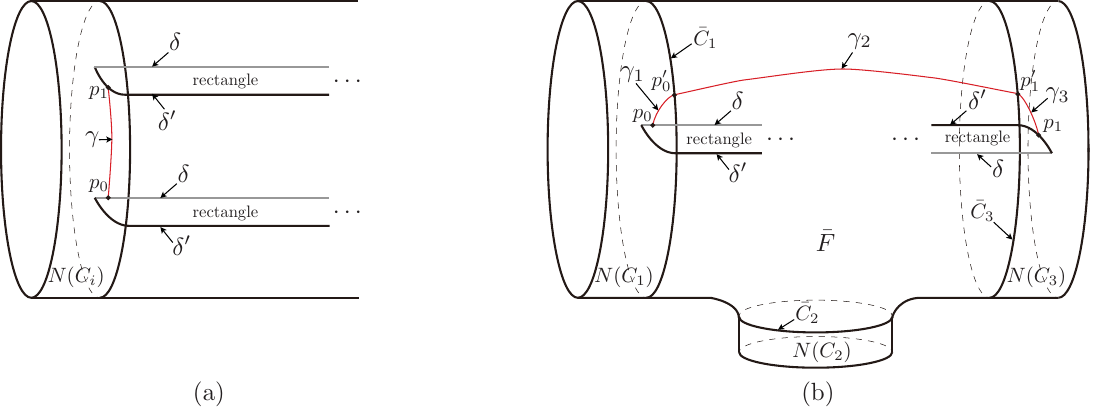}
\caption{the triangles and the rectangle in $R$ \label{fig-Mobius}}
\end{figure}
Hence, there are two points $p_0\in\mathrm{int}(\delta\cap  N(C_i))$ and $p_1\in\mathrm{int}(\delta'\cap N(C_i))$ such that (i) they are connected by a simple path $\gamma\subset \mathrm{int}(N(C_i))$, (ii) $\mathrm{int}(\gamma)$ misses $R$, and (iii) $\mathrm{int}(\gamma)$ misses the two $I$-fibers of $N(C_i)$ intersecting $\delta$.
Let $N(\gamma)$ be a small neighborhood of $\gamma$ in $N(C_i)$.
Then we can see $N(\gamma)\cup R$ is a Mobius band embedded in $F$, violating the assumption that $F$ is an orientable surface.

Hence, $\delta\subset N(C_i)$, i.e. it is a subarc of an $I$-fiber of $N(C_i)$.
Indeed, $\delta$ belongs to $\mathrm{int}(N(C_i))$ by Assumption \ref{assumption-5}.
If $\delta'\subset N(C_i)$, then $\partial R\subset N(C_i)$ and therefore either $R\subset N(C_i)$ or $R$ contains $F-N(C_i)$, leading to a contradiction.
Hence, $\delta'\subsetneq N(C_i)$.
In this case, $\partial N(C_i)$ cuts off an outermost disk $\Delta$ from $R$ such that $\partial\Delta$ consists of a subarc of $\delta'$ and a subarc of $\partial N(C_i)$ because $\delta\cap \partial N(C_i)=\emptyset$.
But this violates Assumption \ref{assumption-4}.
Hence, we have ruled out the cases (\ref{p1}) and (\ref{p2}).

Let us consider the case (\ref{p3}), i.e. $R\subset N(C_1)\cup\bar{F}\cup N(C_3)$, $R\cap N(C_1)\neq\emptyset$, and $R\cap N(C_3)\neq\emptyset$.
In this case, each of $\delta$ and $\delta'$ intersects both $N(C_1)$ and $N(C_3)$.
Since $R$ is a bigon, $|(\partial N(C_1)\cup\partial N(C_3))\cap \delta|=2$ and  $|(\partial N(C_1)\cup\partial N(C_3))\cap \delta'|=2$ by the similar argument using Assumption \ref{assumption-5} in the previous case.
Moreover, if any component of $\partial N(C_1)\cup\partial N(C_3)$ comes into $R$ by passing through $\delta$, then it must leave $R$ by passing through $\delta'$ by Assumption \ref{assumption-1}.
Hence, $\partial N(C_1)\cup\partial N(C_3)$ divides $R$ into two triangles in $N(C_1)$ and $N(C_3)$ respectively and one rectangle in $\bar{F}$.
Choose four points $p_0\in\mathrm{int}(\delta \cap N(C_1))$, $p_0'\in \mathrm{int}(\bar{C}_1-R)$, $p_1'\in \mathrm{int}(\bar{C}_3-R)$ and $p_1\in\mathrm{int}(\delta'\cap  N(C_3))$.
Then we can connect $p_0$ and $p_0'$ by a simple path $\gamma_1\subset N(C_1)$ such that (i) $\gamma_1$ turns left in $N(C_1)$ and (ii) $\mathrm{int}(\gamma_1)$ misses $R$ because $\delta'$ turns right in $N(C_1)$ (see (b) of Figure \ref{fig-Mobius}).
Since the complement of $R$ in $\bar{F}$ is connected by the assumption that $R\cap \bar{F}$ is a rectangle such that two opposite edges belong to $\bar{C}_1$ and $\bar{C}_3$ respectively, we can connect $p_0'$ and $p_1'$ by a simple path $\gamma_2$ in $\bar{F}-R$ such that $\mathrm{int}(\gamma_2)$ misses $\cup_{j=1}^3 \bar{C}_j$.
Moreover, we can connect $p_1'$ and $p_1$ by a simple path $\gamma_3\subset N(C_3)$ such that (i) $\gamma_3$ turns right in $N(C_3)$ and (ii) $\mathrm{int}(\gamma_3)$ misses $R$ because $\delta'$ also turns right in $N(C_3)$.
Let $\gamma$ be $\cup_{j=1}^3\gamma_j$.
Then we can see $N(\gamma)\cup R$ is a Mobius band embedded in $F$, violating the assumption that $F$ is an orientable surface.
This means we also have ruled out the case (\ref{p3}). 

Hence, we conclude only the case (\ref{p4}) holds, i.e. $R\subset N(C_i)\cup\bar{F}\cup N(C_2)$, $R\cap N(C_i)\neq\emptyset$, and $R\cap N(C_2)\neq\emptyset$ for $i=1$ or $3$.
Let us consider the pair of pants $\tilde{F}$ and the families of properly embedded arcs $\mathcal{A}$ and $\mathcal{A}'$ in $\tilde{F}$, where $\mathcal{A}=\{$the components of $\partial D\cap \tilde{F}$ connecting $\tilde{C}_i$ and $\tilde{C}_2$ for $i=1$ or $3\}$ and $\mathcal{A}'=\{$the components of $\bar{\nu}(\partial D)\cap \tilde{F}$  connecting $\tilde{C}_i$ and $\tilde{C}_2$ for $i=1$ or $3\}$.
By the existence of the bigon $R$, $\mathcal{A}\neq \emptyset$ and $\mathcal{A}'\neq \emptyset$.
Note that there might be a component of $\partial D \cap \tilde{F}$ not belonging to $\mathcal{A}$ and therefore there also might be a component of $\bar{\nu}(\partial D)\cap \tilde{F}$ not belonging to $\mathcal{A}'$.
But we can ignore such components of $\partial D \cap \tilde{F}$ and $\bar{\nu}(\partial D)\cap \tilde{F}$ because they cannot intersect the bigon $R$ by the assumption of the case (\ref{p4}).

Let $\Gamma$ be the circle $\mathrm{cl}((\partial V\cup\partial W') - \partial B)$.
Since we isotoped $D$ so that every element of $\mathcal{A}$ intersects $\partial V\cup\partial W'$  transversely in exactly one point before Claim \ref{claim-1} and $\partial D$ satisfies Assumption \ref{assumption-2}, we can take a height function $h:\tilde{F}\to[0,1]$ satisfying the follows (see Figure \ref{fig-height}).
\begin{figure}
\includegraphics[width=6cm]{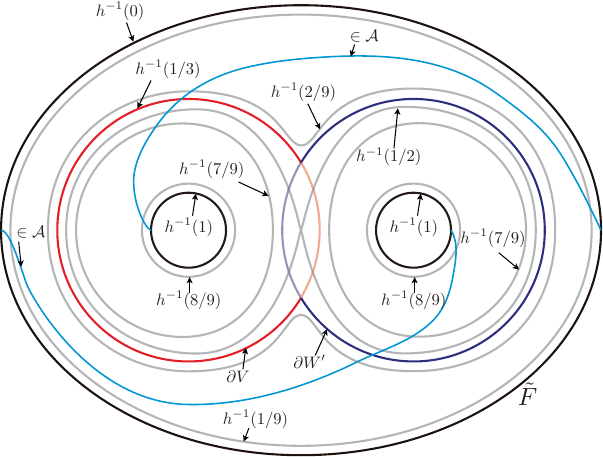}
\caption{a height function of $\tilde{F}$ \label{fig-height}}
\end{figure}
\begin{enumerate}
\item $\mathcal{A}$ consists of at most two families of parallel, vertical line segments.
\item $h^{-1}(0)=\tilde{C}_2$ and $h^{-1}(\frac{1}{3})=\Gamma$,
\item $h^{-1}(t)$ consists of a circle for $t\in[0,1/2)$,
\item $h^{-1}(\frac{1}{2})$ has the ``$\infty$''-shape such that the saddle point is the center point of ``$\infty$''.
\item $h^{-1}(t)$ consists of the union of two disjoint circles for $t\in(\frac{1}{2},1]$, 
\item $h^{-1}(t)\cap (\partial V\cup\partial W')=\emptyset$ for $t\in[\frac{2}{3},1]$, and
\item $h^{-1}(1)=\tilde{C}_1\cup\tilde{C}_3$.
\end{enumerate}

By Assumption \ref{assumption-3}, we can assume $(\mathcal{A}\cup\mathcal{A}')\cap \bar{F}$ consists of at most two families of parallel, vertical line segments in $\bar{F}$.
Moreover, we can also assume every component of $\mathcal{A}'\cap N(C_i)$ is monotone with respect to $h$ for $1\leq i \leq 3$ because $\mathcal{A}\cap N(C_i)$ is vertical and  $\mathcal{A}'\cap N(C_i)$ is the Dehn-twisted image of a parallel copy of $\mathcal{A}\cap N(C_i)$ for $1\leq i \leq 3$. 
Therefore, it is convenient to imagine $\delta'$ in $\tilde{F}$ as a line segment having a long, vertical middle whose ends bend toward the same direction whereas $\delta$ is a vertical line segment by the assumption that $\bar{\nu}(\partial D)$ turns right in $N(C_i)$ for $i=1,3$ and turns left in $N(C_2)$.
This means the bigon $R$ consists of two triangles in $N(C_i)$ and $N(C_2)$ respectively for $i=1$ or $3$ and a rectangle in $\bar{F}$.
Hence, we can remove any bigon region as in Figure \ref{fig-bigon-removing} by an isotopy of $\nu(D)$, where this isotopy can be represented by a \textit{horizontal isotopy} of  $\bar{\nu}(\partial D)$ in $\tilde{F}$.
But there might appear new bigons after we remove the ``\textit{first-step}'' bigons, i.e. the bigons can be found initially.
Hence, we need to consider a sequence of isotopies of $\nu(D)$ inductively as follows.
\begin{enumerate}
\item Remove all bigons appearing in this step by the corresponding horizontal isotopies of $\mathcal{A}'$.
Here, $|\partial D\cap\bar{\nu}(\partial D)|$ decreases by two when each bigon disappears.
\begin{figure}
\includegraphics[width=6cm]{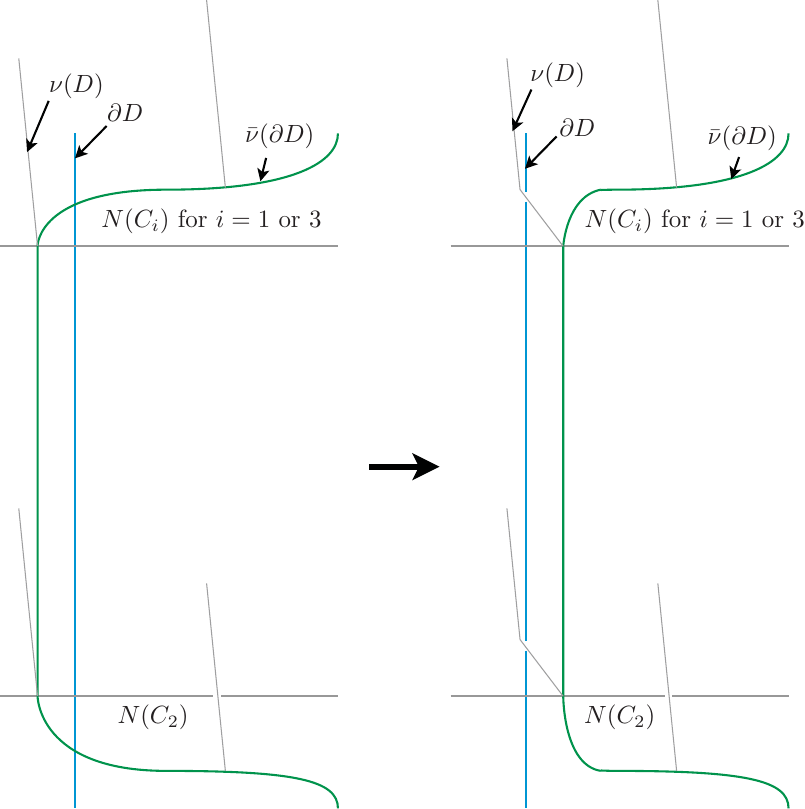}
\caption{the bigon-removing horizontal isotopy \label{fig-bigon-removing}}
\end{figure}
We assume each of these isotopies only affects a small neighborhood of the corresponding bigon so that only one element of $\mathcal{A}$ intersects the neighborhood of the bigon and also only one element of $\mathcal{A}'$ does.
It is convenient to imagine these horizontal isotopies just loosen up the windings of $\bar{\nu}(\partial D)$ in $\tilde{F}$ slightly.
If we make sure $|\bar{\nu}(\partial D)\cap\bar{C}_i|$ is unchanged during each horizontal isotopy for $1\leq i \leq 3$, then the shapes of components of $F-(\bar{\nu}(\partial D)\cup (\cup_{j=1}^3 \partial N(C_j)))$ are also unchanged, i.e. Assumption \ref{assumption-4} is unchanged during the horizontal isotopies.
\item We will find new bigons after these horizontal isotopies.
If there exists a new bigon, then we can consider the \textit{preimage} $\bar{R}$ of the new bigon in the sense that it is the region of $\tilde{F}$ which becomes the new bigon after the horizontal isotopies removing old bigons.
If we ignore the vertices from old bigons, then we can regard $\bar{R}$ as a big bigon.
Hence, we will say  $\partial \bar{R}$ consists of two edges in this sense. 
Then either (i) exactly one edge of $\partial \bar{R}$ has been affected by old bigons or (ii) both edges have been affected by old bigons.
That is, either (i) there is a $(\geq4)$-gon forming $\bar{R}$ with at least one old bigon (see (A) and (B) of Figure \ref{fig-gon}) or (ii) there is a $(\geq6)$-gon forming $\bar{R}$ with at least two old bigons (see (C) of Figure \ref{fig-gon}).
\item For the $(\geq4)$-gon case, at least four vertices of the $(\geq4)$-gon come from a subarc $\alpha$ of $\partial D$ or $\bar{\nu}(\partial D)$ which is an edge of $\bar{R}$ and at least one old bigon $R$  intersects $\alpha$ in the middle (see (A) or (B) of Figure \ref{fig-gon} respectively).
Let $\alpha'$ be the other edge of $\bar{R}$.
\begin{figure}
\includegraphics[width=12cm]{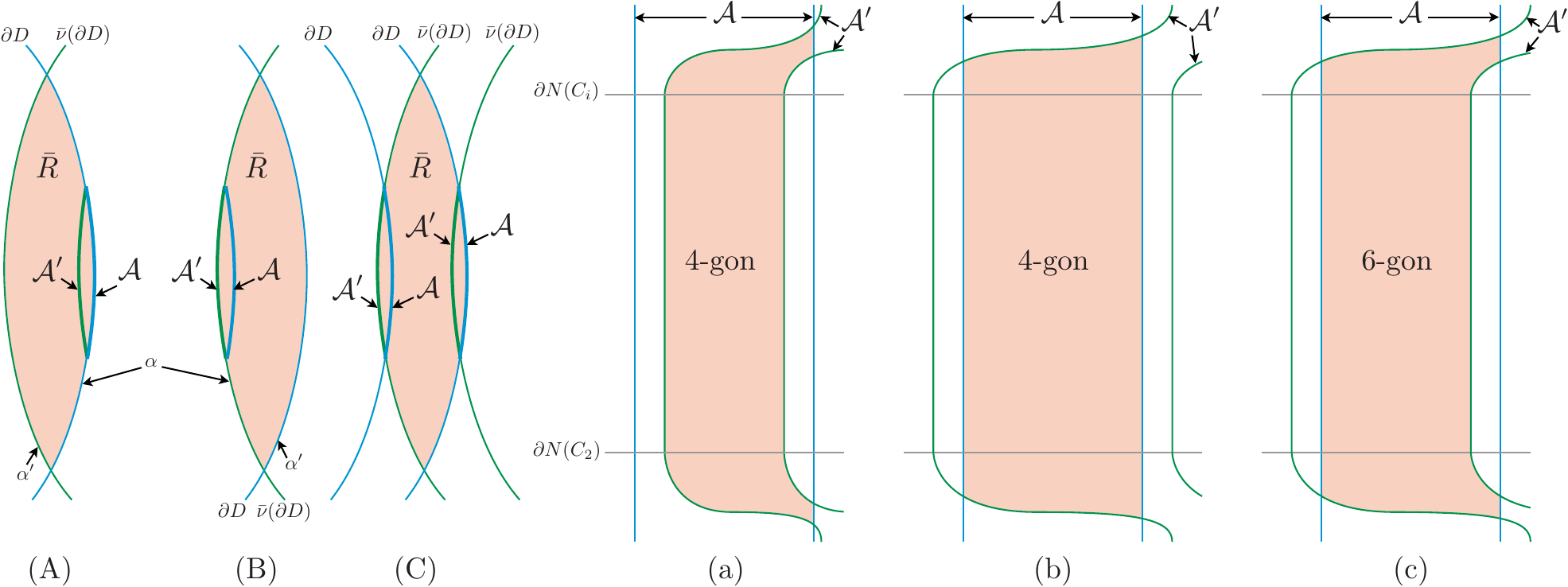}
\caption{possible $4$-gons and $6$-gon  \label{fig-gon}}
\end{figure}
Since $\alpha$  is an extension of an edge of $R$, the extended part in $\alpha$ from the edge of $R$ also belongs to an element of $\mathcal{A}$ or $\mathcal{A}'$ until it leaves $\tilde{F}$.
But we will show $\alpha$  itself belongs to an element of $\mathcal{A}$ or $\mathcal{A}'$ in Claim \ref{claim-2} and therefore we admit Claim \ref{claim-2} for the preimages in this step.

Here, $\cup_{j=1}^3 \partial N(C_j)$ intersects $\alpha$ in at least two points by the existence of $R$.
If we consider Assumption \ref{assumption-1} and Assumption \ref{assumption-4}, then we conclude each component of $\bar{R}\cap(\cup_{j=1}^3 \partial N(C_j))$ must intersect both $\alpha$ and $\alpha'$.
Moreover, there are at least one component of $\bar{R}\cap(\cup_{j=1}^3 \partial N(C_j))$ coming from $\bar{C}_2$ and at least one component coming from $\bar{C}_i$ for $i=1$ or $3$ by the shape of $R$.
This means they cut $\bar{R}$ into two triangles intersecting $N(C_2)$ and $N(C_i)$ respectively and one rectangle between the triangles.
In particular, there is no another component of $\bar{R}\cap(\cup_{j=1}^3 \partial N(C_j))$ in the rectangle by considering the way how $R$ intersects $\bar{C}_2$ and $\bar{C}_i$, i.e. if there is another component, then it must intersects one of the triangles.
This means the rectangle itself belongs to $\bar{F}$. 
Moreover, if $\partial D$ or $\bar{\nu}(\partial D)$ once comes into $N(C_j)$ by passing through one component of $\partial N(C_j)$ for any $1\leq j \leq 3$, then it must leave $N(C_j)$ by passing through the other component by Assumption \ref{assumption-1} or Assumption \ref{assumption-4} respectively, i.e. if  $|\bar{R}\cap(\cup_{j=1}^3 \partial N(C_j))|>2$, then at least one of the triangles intersects both components of $\partial N(C_j)$ for some $1\leq j \leq 3$ and therefore $\alpha\subsetneq \tilde{F}$, violating Claim \ref{claim-2} for this step.
If there is another old bigon intersecting $\alpha$ other than $R$, then $|\bar{R}\cap(\cup_{j=1}^3 \partial N(C_j))|>2$ and therefore we conclude the number of old bigons affecting $\alpha$ is exactly one.
This means the new bigon also consists of two triangles in $N(C_2)$ and $N(C_i)$ respectively for $i=1$ or $3$ and a rectangle in $\bar{F}$ as  old bigons do.
In summary, the new bigon appears between an element of $\mathcal{A}$ and an (possibly horizontally isotoped) element of $\mathcal{A}'$ (see (a) or (b) of Figure \ref{fig-gon}).

\item For the $(\geq6)$-gon case, at least four vertices of the $(\geq6)$-gon come from a subarc $\alpha$ of $\partial D$ or $\bar{\nu}(\partial D)$ which is an edge of $\bar{R}$ and at least two vertices come from the interior of a subarc $\alpha'$ of $\bar{\nu}(\partial D)$ or $\partial D$ which is the other edge of $\bar{R}$ respectively.
Moreover, at least one old bigon intersects $\alpha$ in the middle and at least one old bigon intersects $\alpha'$ in the middle (see (C) of Figure \ref{fig-gon}).
Let us admit Claim \ref{claim-2} for the preimages in this step and suppose $\alpha$ is the edge of Claim \ref{claim-2} without loss of generality.
If we use the same argument in the previous case, then we conclude $|\bar{R}\cap(\cup_{j=1}^3 \partial N(C_j))|=2$ and therefore each of $\alpha$ and $\alpha'$ is affected by only one old bigon.
This means the new bigon consists of two triangles in $N(C_2)$ and $N(C_i)$ respectively for $i=1$ or $3$ and a rectangle in $\bar{F}$ as old bigons do.
In summary, the new bigon appears between an element of $\mathcal{A}$ and a horizontally isotoped element of $\mathcal{A}'$ (see (c) of Figure \ref{fig-gon}).
In particular, these two old bigons intersect the same $N(C_i)$ for $i=1$ or $3$.

\item In summary, any new bigon in the next step also can be removed by a horizontal isotopy if Claim \ref{claim-2} is true for the preimages in this step.
\end{enumerate}

Since every element of $\mathcal{A}'$ bends toward the same horizontal direction near its ends with respect to $h$, the direction of the horizontal isotopies is well-defined.
We call the sequence of these horizontal isotopies the \textit{bigon-removing isotopy}, where the bigon-removing isotopy consists of one or more inductive steps.
After the bigon-removing isotopy, we have no bigon between $\partial D$ and $\bar{\nu}(\partial D)$.

Now we prove Claim \ref{claim-2} so that the bigon-removing isotopy would make sense.

\begin{claim}\label{claim-2}
There is an edge of the preimage for any new bigon such that it intersects an old bigon and  belongs to $\tilde{F}$ in each step of the bigon-removing isotopy.
\end{claim}

\begin{proofN}{Claim \ref{claim-2}}
Let us consider the intersection points $\partial D\cap \bar{\nu}(\partial D)$ before the bigon-removing isotopy.
If $p$ is a point of $\partial D\cap \bar{\nu}(\partial D)$, then there is a component $\mu$ of $\partial D\cap N(C_j)$ containing $p$ for some $1\leq j\leq 3$.
We say $p$ is the \textit{$k$-th from $\bar{F}$} if $p$ is the $k$-th from $\bar{C}_j$ among the points of $\partial D\cap \bar{\nu}(\partial D)$ in $\mu$.

Here, we can observe the follows (see Figure \ref{fig-outer-2}).
\begin{enumerate}
\item Each component  $\mu$ of $\partial D\cap N(C_j)$ or $\mu'$ of $\bar{\nu}(\partial D)\cap N(C_j)$ has $2|\partial D\cap N(C_j)|$ points of $\partial D\cap \bar{\nu}(\partial D)$ for $1\leq j \leq 3$.
Moreover, $\mu$ has the $k$-th point from $\bar{F}$ for every $1\leq k \leq 2|\partial D\cap N(C_j)|$.\label{observation-A1-1}
\item If we consider $N(C_j)$ as the union of the upper half and the lower half such that each half represents  one-time winding of $\mu'$ about $C_j$, where the lower half intersects $\bar{C}_j$, then the points of $\partial D\cap \bar{\nu}(\partial D)$ in the lower half are at most the $|\partial D\cap N(C_j)|$-th from $\bar{F}$ and the points in the upper half are at least the $(|\partial D\cap N(C_j)|+1)$-th from $\bar{F}$.
This means the points of $\partial D\cap \bar{\nu}(\partial D)$ in the upper half of $\mu'$ are at least the $(|\partial D\cap N(C_j)|+1)$-th from $\bar{F}$.
\item In summary, each of $\mu$ and $\mu'$ must have at least one point of $\partial D\cap \bar{\nu}(\partial D)$ that is at least the $(|\partial D\cap N(C_j)|+1)$-th from $\bar{F}$.\label{observation-A1}
\end{enumerate}

\begin{figure}
\includegraphics[width=6cm]{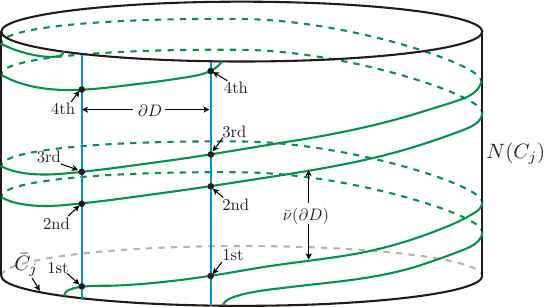}
\caption{$\partial D\cap \bar{\nu}(\partial D)$ in $N(C_j)$\label{fig-outer-2}}
\end{figure}

We will use an induction argument for the steps of the bigon-removing isotopy.

First, we consider the first-step of the bigon-removing isotopy.

Let $\bar{R}$ be the preimage of a new bigon appearing in the second step of the bigon-removing isotopy and $\partial\bar{R}=\alpha\cup\alpha'$ as in (A), (B) and (C) of Figure \ref{fig-gon}.
Since $\bar{R}$ is the preimage of a new bigon, there is a first-step bigon $R$ such that $R$ intersects $\alpha$ and $R\cap N(C_i)\neq \emptyset$ for $i=1$ or $3$ without loss of generality.
Here, every point of $\partial D \cap \bar{\nu}(\partial D)$ in $\mathrm{int}(\alpha)$ must be the 1st from $\bar{F}$ because the old bigons are the first-step bigons.

Let $\tilde{\alpha}$ be the edge of $R$ contained in $\alpha$.
It is sufficient to show that if we extend $\tilde{\alpha}$ in $\alpha$, then the extension must complete $\alpha$ without leaving $\tilde{F}$.
By the shape of the first-step bigons, $\tilde{\alpha}$ must belong to an element $\beta$ of $\mathcal{A}$ or $\mathcal{A}'$ and $\beta$ must have a point of $\partial D \cap \bar{\nu}(\partial D)$ which is at least the $(\min(|\partial D\cap N(C_i)|, |\partial D\cap N(C_2)|)+1)$-th from $\bar{F}$ in each of $N(C_i)$ and $N(C_2)$ by (\ref{observation-A1}).
Here, these intersection points are not the 1st from $\bar{F}$, i.e. they are not the points of $\partial D\cap \bar{\nu}(\partial D)$ contained in $\mathrm{int}(\alpha)$.
Therefore, the extension of $\tilde{\alpha}$ completes $\alpha$ without leaving $\tilde{F}$.
This completes the proof for the first-step of the bigon-removing isotopy.

Suppose there is an edge of the preimage for any new bigon appearing in the next step such that it intersects an old bigon and  belongs to $\tilde{F}$ in the $m$-th step of the bigon-removing isotopy for every $1\leq m\leq n$.
This means every bigon in the 2nd, $\cdots$, $(m+1)$-th step consists of two triangles in $N(C_j)$ and $N(C_2)$ for $j=1$ or $3$ respectively and a rectangle in $\bar{F}$ for $1\leq m \leq n$ by using the observations before Claim \ref{claim-2} inductively.
That is, any vertex of a bigon  in the $(m+1)$-th step for $1\leq m \leq n$ is the closest to $\bar{F}$ among the remaining points of $\partial D\cap \bar{\nu}(\partial D)$ in the corresponding component of $\partial D\cap N(C_j)$ at that step and therefore the vertices of the $(m+1)$-th step bigons are at most the $(m+1)$-th from $\bar{F}$.
If there is no new bigon appearing in the $(n+2)$-th step of the bigon-removing isotopy, then the induction hypothesis means we have completed the proof.
Hence, assume (i) there is a preimage $\bar{R}$ of a new bigon appearing in the $(n+2)$-th  step of the bigon-removing isotopy and let $\partial\bar{R}=\alpha\cup\alpha'$ as in (A), (B) and (C) of Figure \ref{fig-gon} and (ii) we are at the very moment when the $n$-th step ends.
Since $\bar{R}$ is the preimage of a new bigon, there is a bigon $R$ in the $(n+1)$-th step  such that $R$ intersects $\alpha$  without loss of generality and $R\cap N(C_i)\neq \emptyset$ for $i=1$ or $3$.
Here, every point of $\partial D \cap \bar{\nu}(\partial D)$ in $\mathrm{int}(\alpha)$ must be at most $k$-th from $\bar{F}$ for some  $1\leq k\leq (n+1)$ by the shape of the $(n+1)$-th step bigons and we assume $R$ has a vertex which is the $k$-th from $\bar{F}$ without loss of generality.

Recall that the direction of the bigon-removing isotopy is fixed and each  component of $\mathcal{A}'$ which was affected in the $m$-th step of the bigon-removing isotopy for $1\leq m \leq n$ passed through exactly one component of $\partial D\cap N(C_j)$ in that step if it intersects $N(C_j)$.
Moreover, every element of $\mathcal{A}'$ moves continuously intersecting only one of $\partial V-\partial B$ or $\partial W' - \partial B$ during the bigon-removing isotopy before the $(n+1)$-th step, i.e. the bigon-removing isotopy cannot loosen up each element of $\mathcal{A}'$ more than one time winding about $\tilde{C}_j$ right after the $n$-th step for $j=1,3$ and therefore this also holds for $j=2$.
This means for a component $\mu_j$ of $\partial D\cap N(C_j)$ for $1\leq j\leq 3$, if an element of $\mathcal{A}'$ once passed through $\mu_j$ in some step, then it cannot pass through $\mu_j$ twice when we consider  at most the $n$-th step.
Since the bigon-removing isotopy itself is also well defined for the $(n+1)$-th step by the shape of the $(n+1)$-th step bigons, the previous argument also holds even if we replace \textit{the $n$-th step} by \textit{the $(n+1)$-th step} which was not done yet.

Suppose $\mu_i$ is the component of $\partial D\cap N(C_i)$ intersecting the vertical edge of $R$ and assume the vertex of $R$ in $\mu_i$ is the $k_i$-th from $\bar{F}$.
Then $(k_i-1)$-elements of $\mathcal{A}'$ passed through $\mu_i$ removing each  point of $\partial D\cap \bar{\nu}(\partial D)$ from $\mu_i$ in the relevant step before the $(n+1)$-th step.
Since there is at least one element of $\mathcal{A}'$ that didn't pass through $\mu_i$ yet by the existence of the $(n+1)$-th step bigon $R$, the number of elements of $\mathcal{A}'$ that passed through $\mu_i$ before the $(n+1)$-th step is at most $|\bar{\nu}(\partial D)\cap N(C_i)|-1$.
Therefore, we get $k_i-1\leq|\bar{\nu}(\partial D)\cap N(C_i)|-1=|\partial D\cap N(C_i)|-1$, i.e. $k_i\leq |\partial D\cap N(C_i)|$.
Similarly, if we consider the component $\mu_2$ of $\partial D\cap N(C_2)$ intersecting the vertical edge of $R$ and assume the vertex of $R$ in $\mu_2$  is the $k_2$-th from $\bar{F}$, then we get  $k_2\leq|\partial D\cap N(C_2)|$.
But an element of $\mathcal{A}'$ passes through $\mu_i$ if and only if it passes through $\mu_2$  when we consider at most the $n$-th step of the bigon-removing isotopy because $\mu_i$ and $\mu_2$ belong to the same element of $\mathcal{A}$. 
This means $(k_i-1)=(k_2-1)$, i.e. $k_i=k_2=k$.
Therefore, we get $k\leq \min(|\partial D\cap N(C_i)|,|\partial D\cap N(C_2)|)$.

Let $\tilde{\alpha}$ be the edge of $R$ contained in $\alpha$.
By the shape of the $(n+1)$-th step bigons, $\tilde{\alpha}$ must belong to an element $\beta$ of $\mathcal{A}$ or $\mathcal{A}'$.
If we extend $\tilde{\alpha}$ in $\beta$ until it leaves $\tilde{F}$, then either (i) we meet a point of $\partial D\cap \bar{\nu}(\partial D)$ which is at least the $(\min(|\partial D\cap N(C_i)|,|\partial D\cap N(C_2)|)+1)$-th from $\bar{F}$ in each of $N(C_i)$ and $N(C_2)$ by (\ref{observation-A1}) in the start of the proof or (ii) a vertex of $R$ is at least the $(\min(|\partial D\cap N(C_i)|,|\partial D\cap N(C_2)|)+1)$-th from $\bar{F}$.
But the inequality
\[k <\min(|\partial D\cap N(C_i)|,|\partial D\cap N(C_2)|)+1\]
means any point of $\partial D\cap \bar{\nu}(\partial D)$ which is at least the $(\min(|\partial D\cap N(C_i)|,|\partial D\cap N(C_2)|)+1)$-th from $\bar{F}$ cannot belong to $\mathrm{int}(\alpha)$ by the assumption of $k$, i.e. only the former statement holds.
Therefore, the extension of $\tilde{\alpha}$ completes $\alpha$ without leaving $\tilde{F}$.
This completes the proof for the $(n+1)$-th step.

This completes the proof of Claim \ref{claim-2}.
\end{proofN}
 
From now on, we will prove $\bar{\nu}(\partial D)$ intersects $\partial D$ after the bigon-removing isotopy.

If there is an element of $\mathcal{A}'$ fixed during the bigon-removing isotopy, then it intersects $\partial D$ in each of $N(C_i)$ and $N(C_2)$ for $i=1$ or $3$ after the bigon-removing isotopy by Assumption \ref{assumption-5}.
Therefore, $\bar{\nu}(\partial D)\cap \partial D\neq\emptyset$ after the bigon-removing isotopy, leading to the result.

Hence, we assume every element of $\mathcal{A}'$ has been moved during the bigon-removing isotopy.
Let us consider the circle $\Gamma=\mathrm{cl}((\partial V\cup \partial W')- \partial B)$.
Assume $\mathcal{A}'$ is the one before the bigon-removing isotopy.
If we consider $\Gamma\cap\mathcal{A}'$ by tracing $\Gamma$ along the direction of the bigon-removing isotopy, then we get two sequences of elements of $\mathcal{A}'$, say $s_{V}$ and $s_{W'}$, such that each element of $s_{V}$ intersects $\partial V$ and each element of $s_{W'}$ intersects $\partial W'$, i.e. each element of $s_{V}$ connects $\tilde{C}_1$ and $\tilde{C}_2$ and each element of $s_{W}$ connects $\tilde{C}_3$ and $\tilde{C}_2$.
Hence, every element of $s_{W'}$ appears between the last element and the first element of $s_{V}$ with respect to the order of points of $\Gamma\cap\mathcal{A}'$ in $\Gamma$ and vise versa.
Since we have at least one bigon, $s_{V}\neq\emptyset$ or $s_{W'}\neq\emptyset$.

Suppose both $s_{V}$ and $s_{W'}$ consist of at most one element.
If there is the $n$-th step for $n\geq 2$ in the bigon-removing isotopy, then there must be a preliminary $4$-gon or $6$-gon $R'$ as in Figure \ref{fig-gon} in the $(n-1)$-th step such that the preimage containing $R'$ becomes a bigon in the $n$-th step.
But we can discard a $4$-gon of type (a) and a $6$-gon since 
each of them needs two elements of $s_{V}$ or $s_{W'}$ intersecting $R'$.
Suppose $R'$ is a $4$-gon of type (b).
In this case, there is one more element of $s_{V}$ or $s_{W'}$ near the ``long'' vertical edge of $R'$ as in (b) of Figure \ref{fig-gon} by Assumption \ref{assumption-3}, violating the assumption.
Hence, the bigon-removing isotopy consists of only one step and therefore every element of $\mathcal{A}'$ bounds a bigon with an element of $\mathcal{A}$ initially.
Let $\zeta$ be an element of $\mathcal{A}'$ and consider $|\zeta\cap (\partial D\cap N(C_2))|$.
Recall that $|\zeta\cap (\partial D\cap N(C_2))|\geq 2$ before the bigon-removing isotopy.
Since $|\zeta\cap (\partial D\cap N(C_2))|$ decreases by only one in the first step of the bigon-removing isotopy and there is no another step, we conclude $|\zeta\cap (\partial D\cap N(C_2))|\geq 1$ after the bigon-removing isotopy.
That is, $\bar{\nu}(\partial D)\cap \partial D\neq\emptyset$ after the bigon-removing isotopy, leading to the result.

Now assume one of $s_{V}$ and $s_{W'}$, say $s_{V}$, consists of at least two elements.
Let $\zeta$ and $\zeta_0$ be the last and the first elements of $s_{V}$ respectively.
Then $\zeta\cup\zeta_0$ divides $\bar{F}$ into a disk and an annulus in any step of the bigon-removing isotopy because both intersect $\partial V-\partial B$ during the bigon-removing isotopy, where the region from $\zeta_0$ to $\zeta$ in $\bar{F}$ is the disk and the region from $\zeta$ to $\zeta_0$ in $\bar{F}$ is the annulus with respect to the direction of the bigon-removing isotopy.

Suppose $\zeta$ is moved by the bigon-removing isotopy in the $n$-th step for $n\geq 2$, i.e. $\zeta$ bounds a bigon $R^n$ with an element of $\mathcal{A}$ in the $n$-th step.
Then there must be a preliminary $4$-gon or $6$-gon $R'$ as in Figure \ref{fig-gon} in the $(n-1)$-th step such that the preimage containing $R'$ becomes $R^n$ after the $(n-1)$-th step.
If two elements of $\mathcal{A}'$ intersect $R'$, then we call them ``\textit{the 1st element intersecting $R'$}'' and ``\textit{the 2nd element intersecting $R'$}'' with respect to the direction of the bigon-removing isotopy.
Suppose $R'$ is a $4$-gon of type (a) or a $6$-gon.
If we observe (a) and (c) of Figure \ref{fig-gon}, then we can see the 1st element of $\mathcal{A}'$ intersecting $R'$ becomes the element of $\mathcal{A}'$ intersecting $R^n$ after the $(n-1)$-th step.
This means $\zeta$ must be the 1st element intersecting $R'$ and therefore $\zeta_0$ is the 2nd element intersecting $R'$, violating the assumption that the region from $\zeta$ to $\zeta_0$ in $\bar{F}$ is an annulus.
Suppose $R'$ is a $4$-gon of type (b).
In this case, there is another element of $s_{V}$ near the long vertical edge of $R'$ as we saw previously.
Since this element appears next to $\zeta$ with respect to the direction of the bigon-removing isotopy, it should be $\zeta_0$.
But the region from $\zeta$ to $\zeta_0$ in $\bar{F}$ is a disk as in (b) of Figure \ref{fig-gon}, not an annulus, violating the previous paragraph.
Therefore, $\zeta$ cannot be moved by the bigon-removing isotopy in the $n$-th step for $n\geq 2$.

Hence, $\zeta$ bounds a bigon with a subarc of $\mathcal{A}$ before the bigon-removing isotopy and $\zeta$ is fixed after the first step.
This means $|\zeta\cap (\partial D\cap N(C_2))|$ decreases by only one during the bigon-removing isotopy, i.e. $|\zeta\cap (\partial D\cap N(C_2))|\geq 1$ after the bigon-removing isotopy similarly as the previous case.
Therefore, $\bar{\nu}(\partial D)\cap \partial D\neq\emptyset$ after the bigon-removing isotopy, leading to the result.

This completes the proof of Claim \ref{claim-1}.
\end{proofN}

\begin{claim}\label{claim-3}
The geometric intersection number $i(\partial D,\bar{\nu}^k(\partial D))= i(\partial D,\bar{\nu}(\partial D)) + (k-1)c$ for some $c>0$ and it is realized by a sequence of isotopies of $\nu^k(D)$ in $\W$ for $k\geq 1$, where $c, k\in \mathbb{N}$.
\end{claim}

\begin{proofN}{Claim \ref{claim-3}}
If $k=1$, then there is nothing to prove.
Let us do $\nu$ twice.

We will do the bigon-removing isotopy similarly as in Claim \ref{claim-1}.
Let $\mu$ be a component of $\partial D\cap N(C_i)$ for $1\leq i \leq 3$ and consider the points $\mu\cap\bar{\nu}^2(\partial D)$ in $\mu$.
We can see $\mu\cap \bar{\nu}^2(\partial D)= 4|\partial D\cap N(C_i)|$ before the bigon-removing isotopy.
If we consider the proof of Claim \ref{claim-2}, then at most $|\partial D\cap N(C_i)|$ points of  $\mu\cap \bar{\nu}^2(\partial D)$ in $\mu$ disappeared after the bigon-removing isotopy and the remaining points in $\mu$ are more farther from $\bar{F}$ than the disappeared points in $\mu$.
That is, the first half of $\mu\cap \bar{\nu}^2(\partial D)$ in $\mu$ is affected and the other half is unaffected.
Hence, if we observe every component of $\partial D\cap (\cup_{j=1}^3 N(C_j))$, then the sum of the numbers of intersection points corresponding to the first half is the geometric intersection number $i(\partial D,\bar{\nu}(\partial D))$.
Moreover, that corresponding to the other half is $c=|\partial D \cap \bar{\nu}(\partial D)|$ when $\bar{\nu}(\partial D)$ is the one before the bigon-removing isotopy. 
Hence, we get
$$i(\partial D,\bar{\nu}^2(\partial D))= i(\partial D,\bar{\nu}(\partial D)) + c.$$
We can generalize this argument to $\nu^k$  and then we get
$$i(\partial D,\bar{\nu}^k(\partial D))= i(\partial D,\bar{\nu}(\partial D)) + (k-1)c.$$
This completes the proof of Claim \ref{claim-3}.
\end{proofN}

If we consider the family of disks $\{\nu^{k}(D)\}_{k=0}^\infty$, then $i(\partial D,\bar{\nu}^{k_1}(\partial D))\neq i(\partial D,\bar{\nu}^{k_2}(\partial D))$ for  $k_1\neq k_2$ by Claim \ref{claim-3} and these geometric intersection numbers are realized by the bigon-removing isotopies of $\nu^{k_1}(D)$ and $\nu^{k_2}(D)$ in $\W$.
This means two disks $\nu^{k_1}(D)$ and $\nu^{k_2}(D)$ are not isotopic in $\W$ and therefore they represent different vertices in $\D(F)$.
Hence, the orbit $Mod(M,F).[D]$ contains the infinite set $\{[\nu^{k}(D)]\}_{k=0}^\infty$.

This completes the proof.
\end{proof}

\begin{lemma}\label{lemma-disconnected-infinite-II}
Assume $M$ and $F$ as in Lemma \ref{lemma-disconnected-infinite-I}.
Suppose $(V,W)$ is a weak reducing pair of $(\V,\W;F)$ such that $V$ and $W$ are non-separating in $\V$ and $\W$ respectively and $\partial V\cup\partial W$ is separating in $F$.
If $D$ is a compressing disk of $F$ such that $\partial D$ intersects at least one of $\partial V$ and $\partial W$ up to isotopy in $F$, then the orbit $Mod(M,F).[D]$ of the element $[D]\in \D(F)$ consists of infinitely many elements.
\end{lemma}

\begin{proof}
We will use the similar arguments in the proof of Lemma \ref{lemma-disconnected-infinite-I}.
Without loss of generality, assume $D\subset \W$ and $\partial D$ intersects $\partial V$ in $F$ up to isotopy.

By Lemma \ref{lemma-2-8}, $\partial V\cup\partial W$ cuts off two twice-punctured surfaces of genus at least one $F_1'$ and $F_2'$ from $F$.
Hence, we can find a simple path $\gamma$ connecting $\partial V$ and $\partial W$ in $F_2'$ such that $\mathrm{int}(\gamma)$ misses $\partial V\cup\partial W$.
If we drag $\partial W$ along $\gamma$ and push it more into $F_1'$, then we get a disk $W'$ which intersects $V$ transversely in exactly two points, i.e. a subarc of $\partial W'$ bounds a bigon $B$ with a subarc of $\partial V$ in $F_1'$.

We can see $\partial V\cup\partial W'$ divides $F$ into three regions $B$, $F_1$ and $F_2$ such that
\begin{enumerate}
	\item $\mathrm{int}(F_1)$ is an open twice-punctured surface of genus at least one and $F_1\cap B=\partial B$,  and
	\item $F_2$ is a once-punctured surface of genus at least one such that $F_2\cap B$ consists of two points (see Figure \ref{fig-torus-twist-ns}).
\end{enumerate}
\begin{figure}
\includegraphics[width=9cm]{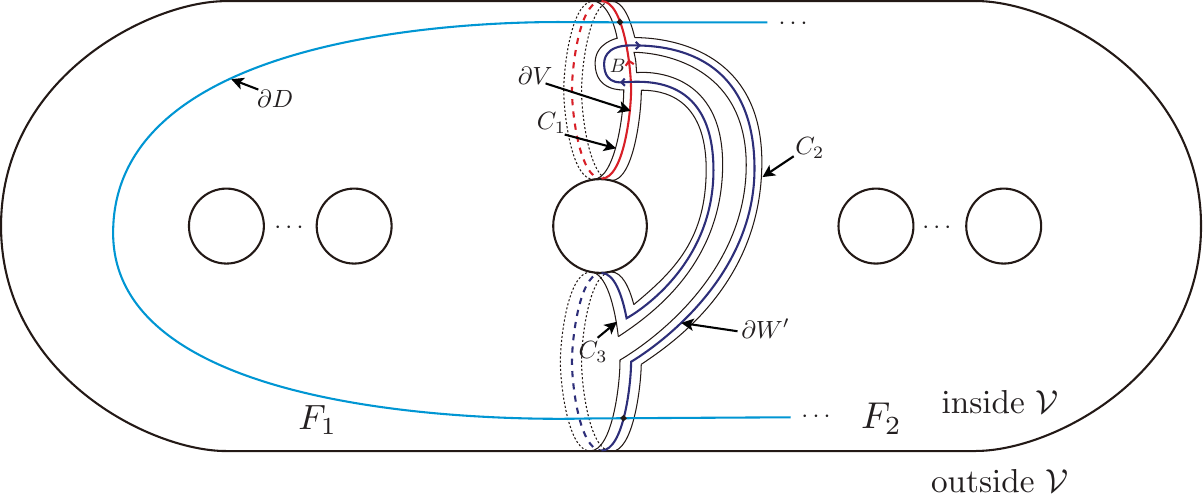}
\caption{the three curves $C_1$, $C_2$, $C_3$\label{fig-torus-twist-ns}}
\end{figure}

Let $\mathcal{T}$ be the solid torus $N(V\cup W')$ and $\tau$ a torus twist defined by $\mathcal{T}$.
Then $\mathcal{T}\cap F$ is a four-punctured sphere $S$ and we can assume the induced map $\bar{\tau}$ in $Mod(F)$ from $\tau$ consists of three Dehn twists about the three curves $C_1$, $\cdots$, $C_3$ in $\mathrm{int}(N(S)-S)$ after discarding the trivial Dehn twist about the curve parallel to $\partial B$ (see Figure \ref{fig-torus-twist-ns}).
Similarly as in the proof of Lemma \ref{lemma-disconnected-infinite-I}, we will use the naming of the four curves and the direction of $\tau$ as in Figure \ref{fig-def-torus-twist} by substituting $V$ and $W'$ for $V$ and $W$ respectively.
Hence, $\bar{\tau}$ is the union of three non-trivial Dehn twists in the disjoint annuli $N(C_1)$, $N(C_2)$ and $N(C_3)$, where $N(C_1)$ and $N(C_3)$ are contained in $\mathrm{int}(F_1)$ and $N(C_2)$ is contained in $\mathrm{int}(F_2)$.
Here, we give an $I$-fibration to each $N(C_i)$ as we did in Definition \ref{definition-torus-twist} for $1\leq i \leq 3$.

Define $\bar{F}$, $\tilde{F}$, $\tilde{C}_i$ and $\bar{C}_i$ as in the proof of Lemma \ref{lemma-disconnected-infinite-I}.
Let $\tilde{F}_1$ and $\tilde{F}_2$ be the closure of the twice-punctured  component and the once-punctured  component of $F-(\cup_{j=1}^3 \tilde{C}_j)$ respectively.

Then we isotope $D$ as we did before Claim \ref{claim-1} and get Assumption \ref{assumption-1}, Assumption \ref{assumption-2}, Assumption \ref{assumption-3}, Assumption \ref{assumption-4} and Assumption \ref{assumption-5}.

We will prove $\partial D$ must intersect at least one of $N(C_1)$ and $N(C_3)$ up to isotopy.
Suppose $\partial D$ misses $N(C_1)\cup N(C_3)$ after an isotopy of $D$ in $\W$.
We can see $\mathrm{cl}((F_1\cup B)-(N(C_1)\cup N(C_3)))$ consists of a twice-punctured surface $F'$ and a pair of pants $F''$.
Since $\partial D$ intersects $\partial V$ up to isotopy, $\partial D$ cannot belong to $F'$ and $\partial D\cap F''$ must have at least one properly embedded essential arc in $F''$.
Here, every properly embedded essential arc of $\partial D\cap F''$  separates the two components of $\partial F''$  intersecting $N(C_1)\cup N(C_3)$.
This means such arcs are parallel in $F''$.
Hence, if we isotope $D$ so that (i) this isotopy removes every inessential arcs of $\partial D\cap F''$ from $F''$ by a standard outermost disk argument and then (ii) makes the remaining parallel essential arcs of $\partial D\cap F''$ not intersect $\partial V$, where the latter isotopy is represented by pushing the parallel arcs into the region in $F''$ missing $\partial V$, then  this violates the assumption that $\partial D$ intersects $\partial V$ up to isotopy.

Let us do the torus twist $\tau$ twice and let $\nu$ be $\tau^2$.
As in the proof of Lemma \ref{lemma-disconnected-infinite-I}, we prepare a disjoint parallel copy of $D$ in $\W$ near $D$ and consider the image of this copy of $\nu$ as $\nu(D)$.
Let $\bar{\nu}$ be the induced map in $Mod(F)$ from $\nu$ (see Figure \ref{fig-high-twists-ns} for $\bar{\nu}(\partial D)$).

\begin{figure}
\includegraphics[width=9cm]{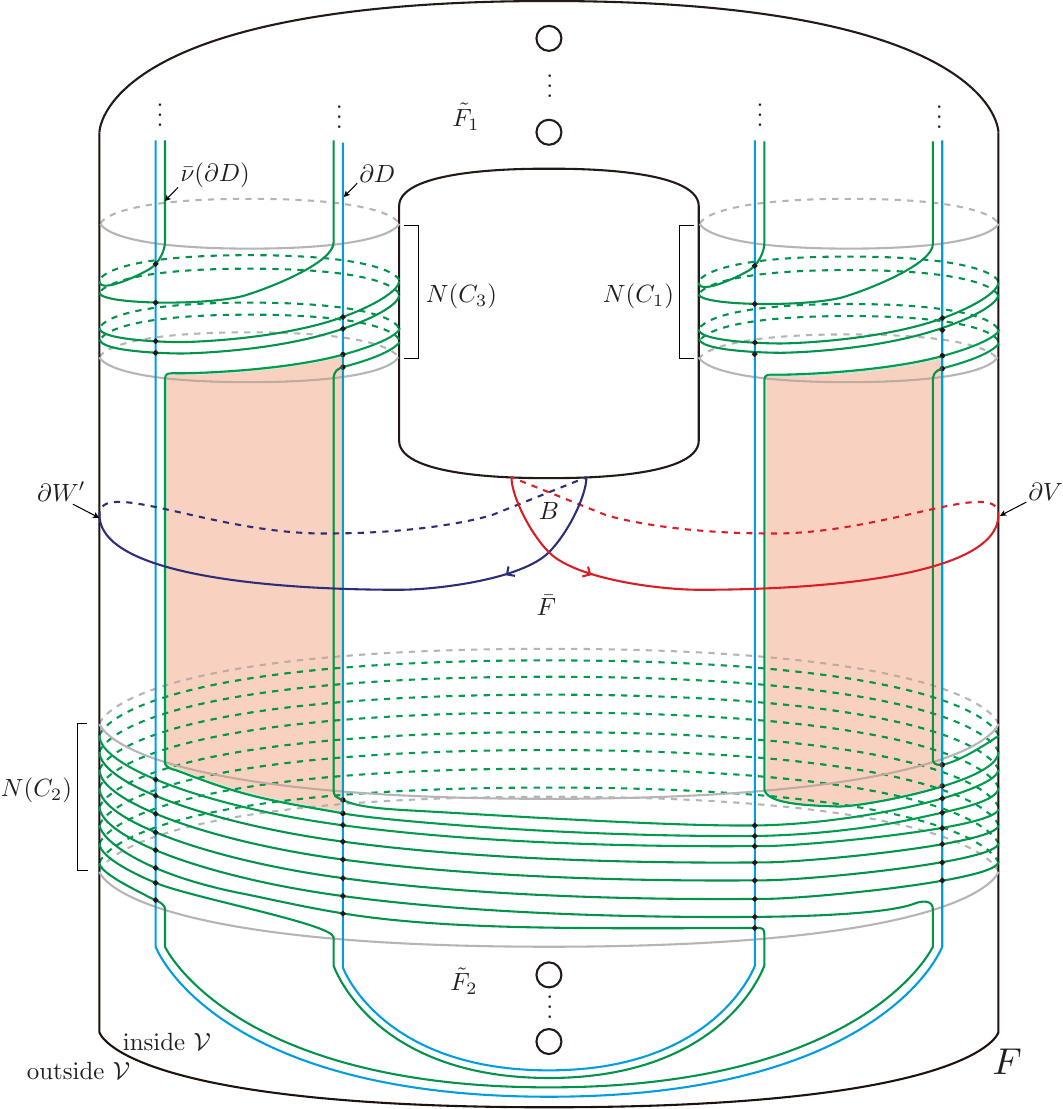}
\caption{the visualization of the three Dehn twists \label{fig-high-twists-ns}}
\end{figure}

\begin{claim}\label{claim-4}
The geometric intersection number $i(\partial D,\bar{\nu}(\partial D))>0$ and it is realized by a sequence of isotopies of $\nu(D)$ in $\W$.
\end{claim}

\begin{proofN}{Claim \ref{claim-4}}
Since $\partial D$ must intersect at least one of $N(C_1)$ and $N(C_3)$ up to isotopy, $\bar{\nu}(\partial D)$ intersects $\partial D$ in $N(C_1)$ or $N(C_3)$  by Assumption \ref{assumption-5}. 
Hence, if there is no bigon between $\partial D$ and $\bar{\nu}(\partial D)$, then the bigon criterion leads to the result.

Suppose there is a bigon $R$ between $\partial D$ and $\bar{\nu}(\partial D)$.
Then the possibilities are as follows by the similar arguments in the proof of Claim \ref{claim-1}.\begin{enumerate}
\item $R\subset\tilde{F}_1\cup N(C_i)$ for some $i=1$ or $3$.\label{px1}
\item $R\subset\tilde{F}_2\cup N(C_2)$.\label{px2}
\item $R\subset\bar{F}\cup N(C_i)$ for some $1\leq i \leq 3$.\label{px3}
\item $R\subset N(C_1)\cup\tilde{F}_1\cup N(C_3)$, $R\cap N(C_1)\neq\emptyset$, and $R\cap N(C_3)\neq\emptyset$.\label{px33}
\item $R\subset N(C_1)\cup\bar{F}\cup N(C_3)$, $R\cap N(C_1)\neq\emptyset$, and $R\cap N(C_3)\neq\emptyset$.\label{px4}
\item $R\subset N(C_i)\cup\bar{F}\cup N(C_2)$, $R\cap N(C_i)\neq\emptyset$, and $R\cap N(C_2)\neq\emptyset$ for $i=1$ or $3$.\label{px5}
\end{enumerate}
Here, $\bar{\nu}(\partial D)$ turns right in the non-separating annuli $N(C_1)$ and $N(C_3)$, and turns left in the separating annulus $N(C_2)$.
If we use the similar arguments in the proof of Claim \ref{claim-1}, then the only possible case is (\ref{px5}).
Hence, we can do the bigon-removing isotopy similarly as in the proof of Claim \ref{claim-1} and therefore there must be at least one intersection point between $\partial D$ and $\bar{\nu}(\partial D )$ in $N(C_2)$ after the bigon-removing isotopy.

This completes the proof of Claim \ref{claim-4}.
\end{proofN}

Therefore, Claim \ref{claim-4} leads to the same result as in Claim \ref{claim-3}.
Hence, the orbit $Mod(M,F).[D]$ contains the infinite set $\{[\nu^{k}(D)]\}_{k=0}^\infty$ similarly as in Lemma \ref{lemma-disconnected-infinite-I}.

This completes the proof.
\end{proof}

Finally, we reach Theorem \ref{theorem-mod-invariant-3-main}.

\begin{theorem}\label{theorem-mod-invariant-3-main}
Let $(\V,\W;F)$ be a weakly reducible, unstabilized Heegaard splitting of genus at least three in an orientable, irreducible $3$-manifold $M$.
If $F$ is topologically minimal and its topological index is two, then  the orbit $Mod(M,F).[D]$ of any element $[D]\in\D(F)$ consists of infinitely many elements.
This means if there is an element of $\D(F)$ having finite orbit, then (i) $F$ is not topologically minimal or (ii) the topological index of $F$ is at least three if $F$ is topologically minimal.
\end{theorem}

\begin{proof}
Since $F$ is topologically minimal and its topological index is two, $\DVW(F)$ is disconnected (see Definition \ref{definition-top-minimal}), i.e. there are at least two components $\mathcal{C}_1$ and $\mathcal{C}_2$ of $\DVW(F)$.
By Lemma \ref{lemma-2-9}, there exists a weak reducing pair $(V_i,W_i)\subset \mathcal{C}_i$ such that (i) $V_i$ and $W_i$ are separating in $\V$ and $\W$ respectively or (ii) $V_i$ and $W_i$ are non-separating in $\V$ and $\W$ respectively and $\partial V_i\cup\partial W_i$ is separating in $F$ for $i=1,2$.

Let $D$ be a compressing disk of $F$ such that $[D]\in\D(F)$ does not belong to $\mathcal{C}_1$.
Without loss of generality, assume $D\subset\V$.
Then $\partial D$ must intersect $\partial W_1$ up to isotopy in $F$ otherwise the isotopy of $\partial D$ in $F$ making it miss $\partial W_1$ can be realized by an isotopy of $D$ in $\V$ and therefore there would be a weak reducing pair $(D,W_1)$, violating the assumption that $[D]$ does not belong to $\mathcal{C}_1$.
Therefore, if we apply Lemma \ref{lemma-disconnected-infinite-I} or  Lemma \ref{lemma-disconnected-infinite-II} to the weak reducing pair $(V_1,W_1)$ and the disk $D$, then we conclude the orbit $Mod(M,F).[D]$ consists of infinitely many elements.

By the symmetric argument, if $[D']\in\D(F)$ is an element not belonging to $\mathcal{C}_2$, then the orbit $Mod(M,F).[D']$ also consists of infinitely many elements.
Because every element of $\D(F)$ does not belong to at least one of $\mathcal{C}_1$ and $\mathcal{C}_2$, we conclude the orbit of any element of $\D(F)$ consists of infinitely many elements.

This completes the proof of the first statement.

If $F$ is topologically minimal and weakly reducible, then the topological index is at least two and therefore the second statement comes from the first statement directly.

This completes the proof.
\end{proof}

\section{The proof of Theorem \ref{theorem-mod-invariant-4}\label{section4}}

In this section, we will prove Theorem \ref{theorem-mod-invariant-4}.

Before proving Theorem \ref{theorem-mod-invariant-4}, we introduce the following lemma (see \cite{10} for the definition of a $\V$- or $\W$-\textit{facial cluster}).

\begin{lemma}[the proof of Theorem 1.1 of \cite{JungsooKim2014}]\label{lemma-agt}
Let $(\V,\W;F)$ be a weakly reducible, unstabilized Heegaard splitting of genus three in an orientable, irreducible $3$-manifold such that $F$ is not topologically minimial.
Then $\DVW(F)$ is one of the following five types.
\begin{enumerate}
\item \underline{Case (a) of the proof of Lemma 3.5 of \cite{JungsooKim2014}:}

$\DVW(F)=\bigcup_{V,W} \Sigma_{VW}$ for all possible $V\subset\V$ and $W\subset\W$, where $\Sigma_{VW}$ is a $3$-simplex of the form $\{V,\bar{V},\bar{W},W\}$ for a fixed weak reducing pair $(\bar{V},\bar{W})$.
We say $\DVW(F)$ is of ``\textit{type (a)}''.
\item \underline{Case (b) of the proof of Lemma 3.5 of \cite{JungsooKim2014}:}

$\DVW(F)$ is a $\W$-facial cluster.
We say $\DVW(F)$ is of ``\textit{type (b)-$\W$}''.
\item \underline{the symmetric case of Case (b) of the proof of Lemma 3.5 of \cite{JungsooKim2014}:}

$\DVW(F)$ is a $\V$-facial cluster.
We say $\DVW(F)$ is of ``\textit{type (b)-$\V$}''.
\item \underline{Case (c) of the proof of Lemma 3.5 of \cite{JungsooKim2014}:}  

$\DVW(F)$ is a weak reducing pair such that each of them cuts off $(\mathrm{torus})\times I$ from the relevant compression body.
We say $\DVW(F)$ is of ``\textit{type (c)}''.
\item \underline{the proof of Lemma 3.7 of \cite{JungsooKim2014}:}  

$\DVW(F)$ is a weak reducing pair such that both disks are non-separating and the union of their boundaries is separating in $F$.
We say $\DVW(F)$ is of ``\textit{type (d)}''.
\end{enumerate}
For each of the first three cases, there is a uniquely determined weak reducing pair $(\bar{V},\bar{W})$ having the following property.
\begin{enumerate}
\item Type (a): $\bar{V}$ and $\bar{W}$ are non-separating in $\V$ and $\W$ respectively.
(In this case, $\partial \bar{V}\cup\partial \bar{W}$ is non-separating in $F$.)
\item Type (b)-$\W$: $\bar{V}$ cuts off $(\text{torus})\times I$ from $\V$ and $\bar{W}$ is non-separating in $\W$.
\item Type (b)-$\V$: $\bar{W}$ cuts off $(\text{torus})\times I$ from $\W$ and $\bar{V}$ is non-separating in $\V$.
\end{enumerate}
In any case, another disk of $\DVW(F)\cap\DV(F)$ other than $\bar{V}$ is a band-sum of two parallel copies of $\bar{V}$ in $\V$ if it exists.
Symmetrically, another disk of $\DVW(F)\cap\DW(F)$ other than $\bar{W}$ is a band-sum of two parallel copies of $\bar{W}$ in $\W$ if it exists.
This means another disk in $\DVW(F)$ other than $\bar{V}$ and $\bar{W}$ cuts off a solid torus from the relevant compression body if it exists.
\end{lemma}

We can refer to Figure \ref{fig-1} for the shapes of $\DVW(F)$ when $F$ is not topologically minimal.
\begin{figure}
\includegraphics[width=12cm]{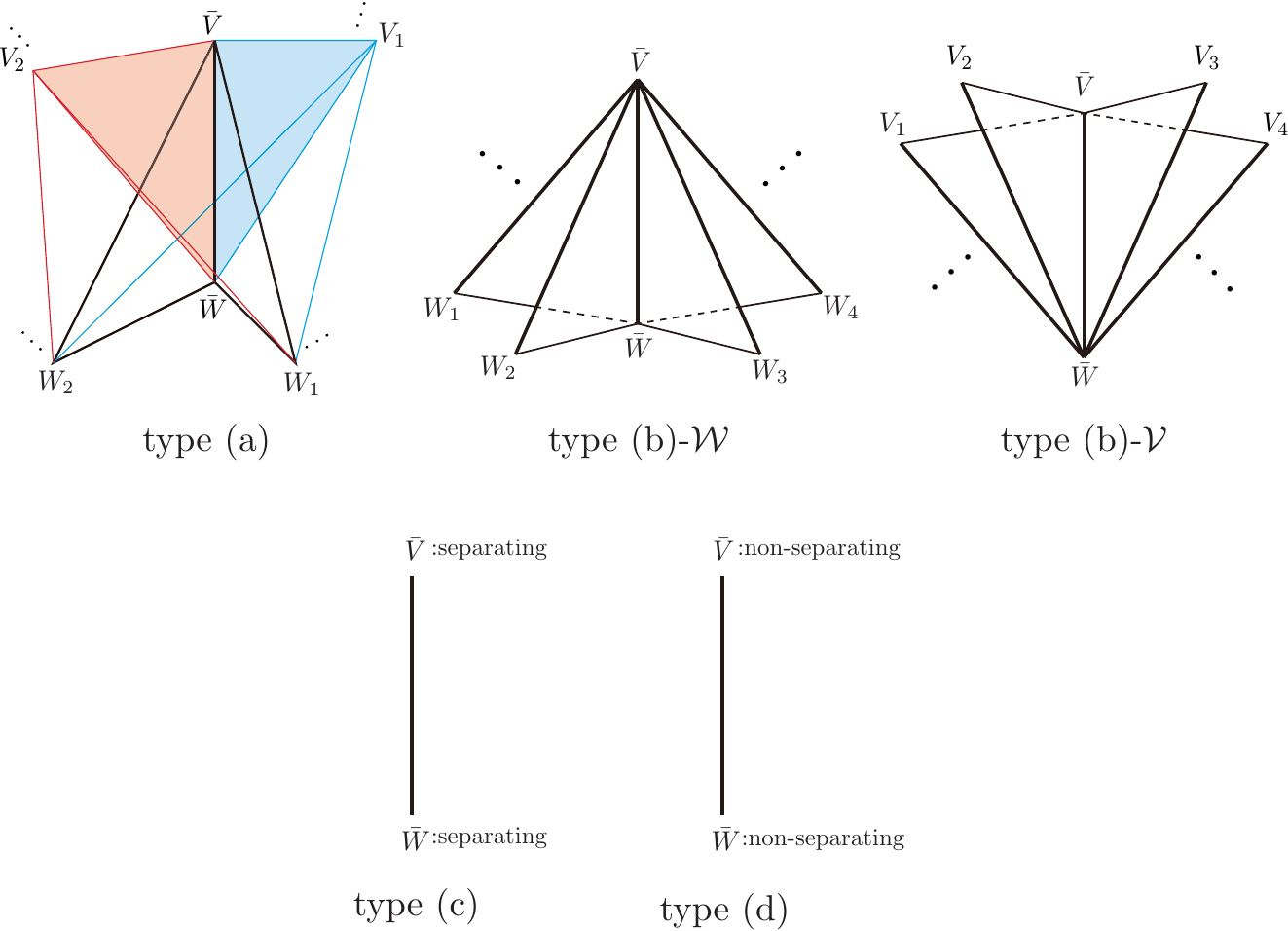}
\caption{The shapes of $\DVW(F)$ \label{fig-1}}
\end{figure}

\begin{lemma}\label{lemma-agt-ii} 
Assume $M$, $F$ and $(\bar{V},\bar{W})$ as in Lemma \ref{lemma-agt}.
Then either (i) $Mod(M,F).[\bar{V}]=\{[\bar{V}]\}$ and $Mod(M,F).[\bar{W}]=\{[\bar{W}]\}$ or  (ii) $Mod(M,F).[\bar{V}]=Mod(M,F).[\bar{W}]=\{[\bar{V}],[\bar{W}]\}$.
\end{lemma}

\begin{proof}
Let $[f]$ be an element of $Mod(M,F)$.
Since $\bar{V}$ does not cut off a solid torus from $\V$ by Lemma \ref{lemma-agt} (recall that the genus of $F$ is three), $f(\bar{V})$ also satisfies this property in the relevant compression body and the symmetric argument also holds for $f(\bar{W})$.
But there is only one weak reducing pair in $\DVW(F)$ consisting of such two disks by Lemma \ref{lemma-agt}, i.e. the weak reducing pair determined by $\{f(\bar{V}),f(\bar{W})\}$ is $(\bar{V},\bar{W})$ itself in $\D(F)$.
Hence, $Mod(M,F).[\bar{V}]=\{[\bar{V}]\}$ or $\{[\bar{V}],[\bar{W}]\}$ by considering $[\mathrm{id}_M]\in Mod(M,F)$ and the symmetric argument also holds for $Mod(M,F).[\bar{W}]$.

If $[f].[\bar{V}]=[\bar{V}]$ for every $[f]\in Mod(M,F)$,
then $[f].[\bar{W}]=[\bar{W}]$ for every $[f]\in Mod(M,F)$, leading to (i).
If $[g].[\bar{V}]=[\bar{W}]$ for some $[g]\in Mod(M,F)$, then $[g].[\bar{W}]=[\bar{V}]$, leading to (ii).

This completes the proof.
\end{proof}

Finally, we reach Theorem \ref{theorem-mod-invariant-4-main}.

\begin{theorem}\label{theorem-mod-invariant-4-main}
Let $(\V,\W;F)$ be a weakly reducible, unstabilized Heegaard splitting of genus three in an orientable, irreducible $3$-manifold $M$.
If $F$ is topologically minimal, then the orbit $Mod(M,F).[D]$ of any element of $[D]\in\D(F)$ consists of infinitely many elements.
Moreover, if $F$ is not topologically minimal, then there exist exactly two elements of $\D(F)$ having finite orbits, where either (i) two singleton sets consisting of each element are the only finite orbits or (ii) the set consisting of the two elements is the only finite orbit.
\end{theorem}

\begin{proof}
Suppose $F$ is topologically minimal.
In \cite{JungsooKim2014}, the author proved the topological index of $F$ must be two.
Therefore, the first statement comes from Theorem \ref{theorem-mod-invariant-3} directly.

Suppose $F$ is not topologically minimal.
In this case, $\DVW(F)$ is a component $\mathcal{C}$ as in Lemma \ref{lemma-agt}.

If $\mathcal{C}$ is of type (c) or type (d), then $\mathcal{C}$ consists of a weak reducing pair such that 
(1) both disks are separating or (2) both disks are non-separating and the union of their boundaries is separating in $F$.
Let $D$ be a compressing disk of $F$ such that $[D]\in\D(F)$ does not belong to $\mathcal{C}$ and assume $D\subset \V$ without loss of generality.
Then it must intersect the disk $\mathcal{C}\cap \DW(F)$ up to isotopy otherwise there would be another weak reducing pair other than $\mathcal{C}$.
This means the orbit $Mod(M,F).[D]$ consists of infinitely many elements by applying Lemma \ref{lemma-disconnected-infinite-I} or  Lemma \ref{lemma-disconnected-infinite-II} to the weak reducing pair $\mathcal{C}$ and the disk $D$.
Since every element of $Mod(M,F)$ sends a weak reducing pair into a weak reducing pair and there is only one weak reducing pair in $\mathcal{C}$, $Mod(M,F)$ preserves the weak reducing pair $\mathcal{C}$.
Hence, if we use the argument in the proof of Lemma \ref{lemma-agt-ii}, then one of (i) and (ii) holds, leading to the result.

Therefore, assume $\mathcal{C}$ is of type (a), type (b)-$\W$ or type (b)-$\V$.
If we consider Lemma \ref{lemma-agt-ii}, then the orbits corresponding to $\bar{V}$ and $\bar{W}$ satisfy (i) or (ii).
Hence, it is sufficient to show that every element of $\D(F)$ other than $\bar{V}$ and $\bar{W}$ has infinite orbit.

\Case{a} $\mathcal{C}$ is of type (b)-$\W$.

In this case, $\mathcal{C}$ is a $\W$-facial cluster, i.e. $\mathcal{C}\cap\DV(F)=\{\bar{V}\}$ and $(\mathcal{C}\cap\DW(F))-\{\bar{W}\}\neq \emptyset$.
Choose a vertex $\tilde{W}$ of $\mathcal{C}$ other than $\bar{V}$ and $\bar{W}$.
Here, $(\bar{V},\tilde{W})$ is a weak reducing pair consisting of separating disks by Lemma \ref{lemma-agt}.
Let us consider another vertex $W$ in $\mathcal{C}$ other than $\bar{V}$, $\bar{W}$ and $\tilde{W}$ (because the number of vertices of $\mathcal{C}$ is infinite (see Lemma 2.15 of \cite{10}), such $W$ exists).
Then both $\tilde{W}$ and $W$ are band-sums of two parallel copies of $\bar{W}$ in $\W$ by Lemma \ref{lemma-agt} and $\partial\tilde{W}$ and $\partial W$ are non-isotopic in $F$ by the assumption that $\tilde{W}\neq W$ in $\DW(F)$.
This means $\partial W\cap\partial \tilde{W}\neq\emptyset$ up to isotopy.
Hence, if we apply Lemma \ref{lemma-disconnected-infinite-I} to the weak reducing pair $(\bar{V},\tilde{W})$ and the disk $W$, then we conclude the orbit $Mod(M,F).[W]$ consists of infinitely many elements.
If we use the previous argument again by changing the roles of $\tilde{W}$ and $W$, then the orbit $Mod(M,F).[\tilde{W}]$ also consists of infinitely many elements, leading to the result.

\Case{b} $\mathcal{C}$ is of type (b)-$\V$.

In this case, we can use the symmetric argument of Case a.

\Case{c} $\mathcal{C}$ is of type (a).

In this case, $\mathcal{C}=\bigcup_{V,W} \Sigma_{VW}$ for all possible $V\subset\V$ and $W\subset\W$, where $\Sigma_{VW}$ is a $3$-simplex of the form $\{V,\bar{V},\bar{W},W\}$ for a fixed weak reducing pair $(\bar{V},\bar{W})$ by Lemma \ref{lemma-agt}.

\begin{claim}\label{claim-6}
For any $3$-simplex $\Sigma_{VW}$ in $\mathcal{C}$, there is a $3$-simplex $\Sigma_{\tilde{V}\tilde{W}}$ such that $\Sigma_{VW}\cap \Sigma_{\tilde{V}\tilde{W}}=\{\bar{V},\bar{W}\}$.
\end{claim}

\begin{proofN}{Claim \ref{claim-6}}
If we compress $F$ along $\bar{V}$ and $\bar{W}$, then we get the torus $F_{\bar{V}\bar{W}}$ because $\bar{V}$ and $\bar{W}$ are non-separating in $\V$ and $\W$ respectively and $\partial \bar{V}\cup\partial \bar{W}$ is non-separating in $F$ by Lemma \ref{lemma-agt}.
Hence, there are the scars of $\bar{V}$, say $\bar{V}_1$ and $\bar{V}_2$, and the scars of $\bar{W}$, say $\bar{W}_1$ and $\bar{W}_2$, in $F_{\bar{V}\bar{W}}$.
Since $V$ and $W$ are band sums of two parallel copies of $\bar{V}$ and $\bar{W}$ in $\V$ and $\W$ respectively by Lemma \ref{lemma-agt} and $V\cap W=\emptyset$, we can find simple arcs representing these band sums in $F_{\bar{V}\bar{W}}$, say $\alpha_{\bar{V}}$ and $\alpha_{\bar{W}}$, such that $\alpha_{\bar{V}}$ connects $\bar{V}_1$ and $\bar{V}_2$, $\alpha_{\bar{W}}$ connects $\bar{W}_1$ and $\bar{W}_2$, and $\alpha_{\bar{V}}\cap\alpha_{\bar{W}}=\emptyset$.
Let $F_{\bar{V}\bar{W}}'$ be the four-punctured torus $\mathrm{cl}(F_{\bar{V}\bar{W}}-(\bar{V}_1\cup\bar{V}_2\cup\bar{W}_1\cup\bar{W}_2))$.
Since $\mathrm{int}(F_{\bar{V}\bar{W}}'-(\alpha_{\bar{V}}\cup\alpha_{\bar{W}}))$ is an open twice-punctured torus, we can find a simple path $\gamma$ from $\bar{V}_1$ to $\bar{W}_1$ in $F_{\bar{V}\bar{W}}'-(\alpha_{\bar{V}}\cup\alpha_{\bar{W}})$ such that $\mathrm{int}(\gamma)$ belongs to $\mathrm{int}(F_{\bar{V}\bar{W}}')$.
Let us consider a path $\beta_{\bar{V}}$ from $\bar{V}_1$ to $\bar{V}_2$ missing $\gamma\cup\alpha_{\bar{V}}$ such that 
\begin{enumerate}
\item $\beta_{\bar{V}}$ starts from $\bar{V}_1$, where the starting point is in a small neighborhood of $\gamma$,
\item proceeds along a parallel copy of $\gamma$ until it reaches a small neighborhood of $\bar{W}_1$, say $N(\bar{W}_1)$,
\item turns around along $\partial N(\bar{W}_1)$ so that it would wrap around $\bar{W}_1$ nearly one time without intersecting $\gamma$,
\item returns to a small neighborhood of $\bar{V}_1$, say $N(\bar{V}_1)$, along another parallel copy of $\gamma$, 
\item turns around along $\partial N(\bar{V}_1)$ until it reaches a small neighborhood of $\alpha_{\bar{V}}$ without intersecting $\gamma$, and
\item proceeds along a parallel copy of $\alpha_{\bar{V}}$ and finally reaches $\bar{V}_2$.
\end{enumerate}
We can refer to Figure \ref{fig-four-curves}.
\begin{figure}
\includegraphics[width=8cm]{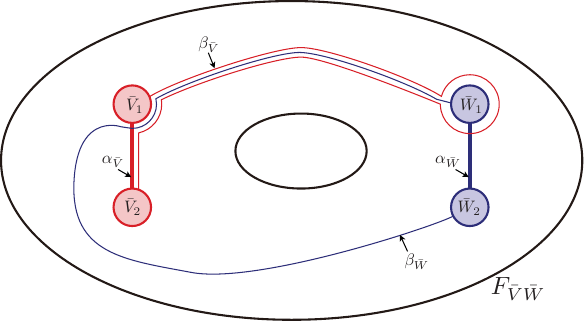}
\caption{obtaining $\tilde{V}$ and $\tilde{W}$ \label{fig-four-curves}}
\end{figure}
Then $\bar{V}_1\cup\alpha_{\bar{V}}\cup\bar{V}_2\cup \beta_{\bar{V}}$ separates $\bar{W}_1$ and $\bar{W}_2$ in $F_{\bar{V}\bar{W}}$ but $\alpha_{\bar{W}}$ connects $\bar{W}_1$ and $\bar{W}_2$ in $F_{\bar{V}\bar{W}}$.
Since $\alpha_{\bar{W}}$ misses $\alpha_{\bar{V}}$, we conclude 
$\beta_{\bar{V}}\cap\alpha_{\bar{W}}\neq\emptyset$ in $F_{\bar{V}\bar{W}}'$  up to isotopy.
Moreover, we can assume $\beta_{\bar{V}}$ intersects $\alpha_{\bar{W}}$ transversely in exactly one point by the construction of $\beta_{\bar{V}}$.

Here, $\mathrm{int}(F_{\bar{V}\bar{W}}'-\beta_{\bar{V}})$ is an open thrice-punctured torus and therefore we can find a simple path $\beta_{\bar{W}}$ from $\bar{W}_2$ to $\bar{W}_1$ in $F_{\bar{V}\bar{W}}'-\beta_{\bar{V}}$ such that $\mathrm{int}(\beta_{\bar{W}})$ belongs to $\mathrm{int}(F_{\bar{V}\bar{W}}'-\beta_{\bar{V}})$.
Since $\bar{V}_1\cup\alpha_{\bar{V}}\cup\bar{V}_2\cup \beta_{\bar{V}}$ separates $\bar{W}_1$ and $\bar{W}_2$ in $F_{\bar{V}\bar{W}}$ and $\beta_{\bar{W}}$ misses $\beta_{\bar{V}}$, $\beta_{\bar{W}}\cap\alpha_{\bar{V}}\neq\emptyset$ in $F_{\bar{V}\bar{W}}'$ up to isotopy.
Here, we can assume $\beta_{\bar{W}}$ intersects $\alpha_{\bar{V}}$ transversely in exactly one point.

Let $\tilde{V}$ be the band sum of two parallel copies of $\bar{V}$ in $\V$ realized by $\beta_{\bar{V}}$ and $\tilde{W}$ be the band sum of two parallel copies of $\bar{W}$ in $\W$ realized by $\beta_{\bar{W}}$.
By the assumptions of $\beta_{\bar{V}}\cap\alpha_{\bar{W}}$ and $\beta_{\bar{W}}\cap\alpha_{\bar{V}}$, we get $\tilde{V}\cap W\neq \emptyset$ and $\tilde{W}\cap V\neq \emptyset$ up to isotopy.
(If we cut $F$ off along $\partial \tilde{V}\cup\partial W$, then a  $4$-gon centered at the intersection point $\beta_{\bar{V}}\cap\alpha_{\bar{W}}$ appears and we can assume there is no more intersection point of $\partial \tilde{V}\cap\partial W$ other than the four vertices of the $4$-gon.
Hence, it is easy to see there is no bigon between $\partial \tilde{V}$ and $\partial W$ in $F$.
Hence, the bigon criterion leads to $\tilde{V}\cap W\neq \emptyset$ up to isotopy.
The symmetric argument also holds for $\tilde{W}$ and $V$.) 

This means $\tilde{V}\neq V$ and $\tilde{W}\neq W$ in $\D(F)$ by considering the weak reducing pair $(V,W)$.
Since $\tilde{V}\cap\tilde{W}=\emptyset$ up to isotopy, $\{\tilde{V},\bar{V},\bar{W},\tilde{W}\}$ forms a $3$-simplex $\Sigma_{\tilde{V}\tilde{W}}$ such that $\tilde{V}\subset\V$ and $\tilde{W}\subset \W$ and we can see $\Sigma_{VW}\cap\Sigma_{\tilde{V}\tilde{W}}=\{\bar{V},\bar{W}\}$.

This completes the proof of Claim \ref{claim-6}.
\end{proofN}

Let $V^\ast$ be an arbitrary vertex of $\mathcal{C}$ which is neither $\bar{V}$ nor $\bar{W}$.
Without loss of generality, assume $V^\ast\subset \V$.
By Lemma \ref{lemma-agt}, there is a $3$-simplex $\Sigma_{V^\ast W^\ast}$ containing $V^\ast$ in $\mathcal{C}$.
By Claim \ref{claim-6}, we can find a $3$-simplex $\Sigma_{\tilde{V}\tilde{W}}$ such that $\Sigma_{V^\ast W^\ast}\cap\Sigma_{\tilde{V}\tilde{W}}=\{\bar{V},\bar{W}\}$, i.e. $V^\ast\neq \tilde{V}$ in $\DV(F)$.
Since both $V^\ast$ and $\tilde{V}$ are band-sums of two parallel copies of $\bar{V}$ in $\V$ by Lemma \ref{lemma-agt} and $\partial V^\ast$ and $\partial \tilde{V}$ are non-isotopic in $F$, $\partial V^\ast\cap\partial \tilde{V}\neq\emptyset$ up to isotopy.
Here, $(\tilde{V},\tilde{W})$ is a weak reducing pair consisting of separating disks by Lemma \ref{lemma-agt}.
Hence, if we apply Lemma \ref{lemma-disconnected-infinite-I} to the weak reducing pair $(\tilde{V},\tilde{W})$ and the disk $V^\ast$, then we conclude the orbit $Mod(M,F).[V^\ast]$ consists of infinitely many elements, leading to the result.

This completes the proof.
\end{proof}

\section*{Acknowledgments}
This research was supported by BK21 PLUS SNU Mathematical Sciences Division.

\end{document}